\newtheorem{theorem}{Theorem}[section]
\newtheorem{lemma}[theorem]{Lemma}
\newtheorem{proposition}[theorem]{Proposition}
\newtheorem{corollary}[theorem]{Corollary}
\theoremstyle{definition}
\newtheorem{definition}[theorem]{Definition}
\theoremstyle{remark}
\newtheorem{remark}[theorem]{Remark}
\renewcommand{\epsilon}{\varepsilon}
\renewcommand{\index}{\mathcal{I}}
\newcommand{\euler}{\mathrm{e}}
\newcommand{\rout}{R}
\newcommand{\sfuc}{\mathrm{sfuc}}
\newcommand{\RR}{\mathbb{R}}
\newcommand{\CC}{\mathbb{C}}
\newcommand{\NN}{\mathbb{N}}
\newcommand{\ZZ}{\mathbb{Z}}
\newcommand{\Ca}{\mathrm{B}}
\newcommand{\Cb}{\mathrm{A}}
\newcommand{\Da}{D_{\mathrm{B}}}
\newcommand{\Db}{D_{\mathrm{A}}}
\newcommand{\Na}{N_{\mathrm{B}}}
\newcommand{\Nb}{N_{\mathrm{A}}}
\DeclareMathOperator{\funs}{s}
\begin{document}
\title{\Large Scale-free and quantitative unique continuation for infinite dimensional spectral subspaces of Schr\"odinger operators}
\author{Matthias T\"aufer}
\affil{Technische Universit\"at Dortmund, Fakult\"at f\"ur Mathematik, Germany}
\author{Martin Tautenhahn}
\affil{Technische Universit\"at Chemnitz, Fakult\"at f\"ur Mathematik, Germany}
\date{\vspace{-3em}}
\maketitle

\begin{abstract}
We prove a quantitative unique continuation principle for infinite dimensional spectral subspaces of Schr\"odinger operators.
Let $\Lambda_L = (-L/2,\allowbreak L/2)^d$ and $H_L = -\Delta_L + V_L$ be a Schr\"odinger operator on $L^2 (\Lambda_L)$ with a bounded potential $V_L : \Lambda_L \to \RR^d$ and Dirichlet, Neumann, or periodic boundary conditions.
Our main result is of the type
  \[
   \int_{\Lambda_L} \lvert \phi \rvert^2 \leq C_\sfuc \int_{W_\delta (L)} \lvert \phi \rvert^2,
  \]
  where $\phi$ is an infinite complex linear combination of eigenfunctions of $H_L$ with exponentially decaying coefficients, $W_\delta (L)$ is some union of equidistributed $\delta$-balls in $\Lambda_L$ and $C_{\sfuc} > 0$ an $L$-independent constant.
  The exponential decay condition on $\phi$ can alternatively be formulated as an exponential decay condition of the map $\lambda \mapsto \lVert \chi_{[\lambda , \infty)} (H_L) \phi \rVert^2$.
  The novelty is that at the same time we allow the function $\phi$ to be from an infinite dimensional spectral subspace and keep an explicit control over the constant $C_{\sfuc}$ in terms of the parameters.
  Moreover, we show that a similar result cannot hold under a polynomial decay condition.

\end{abstract}
\section{Introduction}
Starting with the pioneering work \cite{Carleman-39}, there has been plenty of research concerning unique continuation properties for elliptic operators $L$ with non-analytic coefficients. That is, if the solution $u$ of $L u = 0$ in $\Omega \subset \RR^d$ vanishes in a non-empty open set $\omega \subset \Omega$, then $u$ will be identically zero, see e.g.\ \cite{Hoermander-89} and the references therein.
More than this, there are several quantitative formulations of unique continuation which proved to be useful in a variety of applications, see e.g.\ \cite{BourgainK-05,RousseauL-12,BourgainK-13,Rojas-MolinaV-13,NakicTTV}. For instance, Bourgain and Kenig \cite{BourgainK-05} showed that if $\Delta u = V u$ in $\RR^d$, $u (0) = 1$ and $u,V \in L^\infty(\RR^d)$ then for all $x \in \RR^d$ with $\lvert x \rvert > 1$ we have
 \begin{equation} \label{eq:BK05}
  \max_{\lvert y-x \rvert \leq 1} \lvert u (y) \rvert > c \cdot \exp \left( - c' (\log \lvert x \rvert) \lvert x \rvert^{4/3}\right) .
 \end{equation}
This quantitative formulation has been crucial for the proof of
Anderson localization for the continuum Anderson model with Bernoulli-distributed coupling constants. An $L^2$-variant of Ineq.~\eqref{eq:BK05} has been shown in \cite{BourgainK-13} in order to study the density of states of Schr\"odinger operators.
A similar quantitative formulation is an estimate of the type
\begin{equation} \label{eq:qucp_intro}
  \lVert u \rVert_{L^2(\omega)}^2  \geq C\lVert u \rVert_{L^2(\Omega)}^2 ,
\end{equation}
where $u$ is in the range of some spectral projector of a Schr\"odinger operator with potential $V$, and $C$ is some positive constant depending on the geometry of $\omega$ and the potential $V$.
Such quantitative unique continuation principles have been applied to control theory for the heat equation and spectral theory of random Schr\"odinger operators, see e.g.\ the recent \cite{TaeuferTV-16} and the references therein. Let us emphasize that the dependence of $C$ on the geometry of $\omega$ turned out to be important for some of these applications. To be more specific, let $\Omega \subset \RR^d$ be a finite, open and non-empty connected set, $W \in L^\infty (\Omega)$ and
\[
 H_\Omega = -\Delta + W \quad \text{on} \quad L^2 (\Omega)
\]
with Dirichlet boundary conditions
Then Ineq.~\eqref{eq:qucp_intro} has been obtained in \cite{RousseauL-12} in the case $W \equiv 0$, $\omega \subset \Omega$ open and non-empty, $u$ a (finite or infinite) linear combinations of eigenfunctions of $H_\Omega$. However, the dependence of $C$ on the geometry of $\omega$ is not known.
In \cite{Rojas-MolinaV-13} Ineq.~\eqref{eq:qucp_intro} is proven for $\Omega = (-L/2 , L/2)^d$, $\omega$ an equidistributed arrangement of $\delta$-balls, $u \in W^{2,2} (\Omega)$ satisfying $\lvert \Delta u \rvert \leq \lvert W u \rvert$, and with $$C = \delta^{N (1 + \lVert W \rVert_\infty^{2/3})}$$ where $N > 0$ depends only on the dimension. For the application to random Schr\"odinger operators it is crucial that the result is scale-free, i.e.\ $C$ is independent of $L$. In \cite{Rojas-MolinaV-13} the question was raised whether a similar estimate holds for finite linear combinations of eigenfunctions $u \in \operatorname{Ran} \chi_{(-\infty , b]} (H_\Omega)$. A partial answer to this question was given in \cite{Klein-13}. The full answer has been announced in \cite{NakicTTV-15}, and full proofs have been given in \cite{NakicTTV}. There, the constant
\[
C = \delta^{N (1 + \lVert W \rVert_\infty^{2/3} + \sqrt{\lvert b \rvert})}
\]
is derived.
Let us emphasize that this was the missing step to study localization for random Schr\"odinger operators with non-linear dependence on the
random parameters.

The aim of this note is to extend the main result of \cite{NakicTTV} to the natural setting of infinite dimensional spectral subspaces.
For this purpose, we first extend the strategy of \cite{NakicTTV} to prove Ineq.~\eqref{eq:qucp_intro} for infinite linear combinations of eigenfunctions with exponentially decaying coefficients, cf.\ Theorem~\ref{thm:result1}.
In a second step we show that Ineq.~\eqref{eq:qucp_intro} holds if $\lVert \chi_{[\lambda , \infty)} (H_\Omega) \phi \rVert^2$ decays exponentially in $\lambda$, cf.\ Theorem~\ref{thm:result2}.
In order not to lose the explicit control over the constant $C_\sfuc$, in particular its $L$-independence, this step requires a detailed analysis using precise knowledge of the $\Delta$-eigenvalues and eigenfunctions, cf.\ Lemma~\ref{lemma:B->A}.
While the proofs are given in Section~\ref{sec:proofs}, we will show in Section~\ref{sec:optimality} that our results are optimal in the sense that they cannot hold under polynomial decay conditions.

\section{Notation and main results}
\label{sec:results}
Let $d \in \NN$.
For $L,r > 0$ we denote by $\Lambda_L = (-L/2 , L/2)^d \subset \RR^d$ the $d$-dimensional cube with side length $L$ and by $B(x , r)$ the ball with center $x$ and radius $r$ with respect to the Euclidean norm.
The Laplace operator on $L^2 (\Lambda_L)$ with Dirichlet, Neumann or periodic boundary conditions is denoted by $\Delta_L$.
For $\Omega \subset \RR^d$ open and $\psi \in L^2 (\Omega)$ we denote by $\lVert \psi \rVert = \lVert \psi \rVert_\Omega = \lVert \psi \rVert_{L^2(\Omega)}$ the usual $L^2$-norm of $\psi$.
If $\Gamma \subset \Omega$ we use the notation $\lVert \chi_\Gamma \psi \rVert_\Omega = \lVert \psi \rVert_\Gamma = \lVert \psi \rVert_{L^2(\Gamma)} $.
Moreover, for a measurable and bounded $V : \RR^d \to \RR$ we denote by $V_L : \Lambda_L \to \RR$ its restriction to $\Lambda_L$ given by $V_L (x)  = V (x)$ for $x \in \Lambda_L$, and by
\[
H_L = -\Delta_L + V_L \quad \text{on} \quad L^2 (\Lambda_L)
\]
the corresponding Schr\"odinger operator.
We will also write $V = V_+ - V_-$ for the decomposition into positive and negative part and $\lVert V \rVert_\infty$ for the $L^\infty$-norm of $V$.
The operator $H_L$ is lower semibounded and self-adjoint with lower bound $-\lVert V_- \rVert_\infty$ and purely discrete spectrum.
\begin{definition}
 Let $G > 0$ and $\delta > 0$. We say that a sequence $Z = (z_j)_{j \in (G\ZZ)^d} \subset \RR^d$ is \emph{$(G,\delta)$-equidistributed}, if
 \[
  \forall j \in (G \ZZ)^d \colon \quad  B(z_j , \delta) \subset \Lambda_G + j = \{x + j \in \RR^d : x \in \Lambda_L \}.
\]
Corresponding to a $(G,\delta)$-equidistributed sequence $Z$ we define for $L \in G \NN$ the set
\[
W_\delta (L) = \bigcup_{j \in (G \ZZ)^d } B(z_j , \delta) \cap \Lambda_L ,
\]
where we suppressed the dependence of $W_\delta (L)$ on $G$ and on the choice of $Z$.
\end{definition}
\begin{theorem}\label{thm:result2}
There is $\Nb = \Nb(d)>0$ such that for
all $\kappa > 0$,
all $G \in (0, \kappa/ \allowbreak ( 18 \euler \sqrt{d}))$,
all $\delta \in (0,G/2)$,
all $(G,\delta)$-equidistributed sequences $Z$,
all measurable and bounded $V: \RR^d \to \RR$,
all $L \in G \NN$,
all $\Db \geq 1$
and all $\phi \in L^2 (\Lambda_L)$ satisfying
\begin{equation}\label{eq:Assumption_B}
\text{for all} \
 \lambda \in [- \lVert V_- \rVert_\infty, \infty) : \quad \left\lVert \chi_{[\lambda , \infty)} (H_L) \phi \right\rVert_{\Lambda_L}^2
 \leq
 \Db \euler^{- \kappa \sqrt{\lambda + \lVert V_- \rVert_\infty}} \lVert \phi \rVert_{\Lambda_L}^2 ,
\end{equation}
we have
\begin{equation} \label{eq:result1}
\lVert \phi \rVert_{W_\delta (L)}^2
\geq C_\sfuc^{\Cb} \lVert \phi \rVert_{\Lambda_L}^2
\end{equation}
where
\[
 C_\sfuc^{\Cb} = C_{\sfuc}^{\Cb} (d, \delta ,  \Db  , \lVert V \rVert_\infty) := \left( \frac{\delta}{G} \right)^{\Nb \bigl(1 + G^{4/3} \lVert V \rVert_\infty^{2/3}  +  \ln \Db  + G / ( \kappa - G 18 \euler \sqrt{d}) \bigr )} .
 \quad
\]
\end{theorem}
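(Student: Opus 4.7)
The proof is a two-step reduction. First, the exponential decay of the spectral-projection tails in \eqref{eq:Assumption_B} is converted, by means of Lemma~\ref{lemma:B->A}, into exponential decay of the individual coefficients of $\phi$ in an $H_L$-eigenbasis. Second, Theorem~\ref{thm:result1}, which handles exactly such coefficient-decay hypotheses, is invoked to deduce the scale-free estimate \eqref{eq:result1}. The only real work lies in keeping the constants under explicit control, and in particular their $L$-independence, so that the final exponent agrees with the one quoted for $C_\sfuc^\Cb$.

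To set up the reduction, let $(\psi_k,\lambda_k)_k$ be an orthonormal eigensystem of $H_L$ and expand $\phi = \sum_k c_k \psi_k$. Hypothesis \eqref{eq:Assumption_B} then reads
\[
 \sum_{k \colon \lambda_k\ge \lambda} \lvert c_k\rvert^2 \le \Db\,\euler^{-\kappa\sqrt{\lambda + \lVert V_-\rVert_\infty}}\,\lVert\phi\rVert_{\Lambda_L}^2 .
\]
Slicing $[-\lVert V_-\rVert_\infty, \infty)$ into thin intervals and reading off the mass $\sum_{\lambda_k \in I_n}\lvert c_k\rvert^2$ from the difference of the tails at consecutive endpoints converts this into coefficient-wise bounds, provided one can control the number of $k$ with $\lambda_k \in I_n$. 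This counting is done through the explicit eigenvalues of $-\Delta_L$ (products of sines/cosines with eigenvalues of the form $\pi^2\lvert n\rvert^2/L^2$ for the appropriate index set according to the boundary conditions) together with the min-max comparison $\lambda_k(H_L)\ge \lambda_k(-\Delta_L)-\lVert V_-\rVert_\infty$. The delicate point, and the reason Lemma~\ref{lemma:B->A} is needed, is that the Weyl-type counting function grows as $L^d (\lambda+\lVert V_-\rVert_\infty)^{d/2}$, so that a naive argument would produce an unwanted $L$-dependent loss. The lemma absorbs this polynomial growth into a portion $\kappa - 18\euler\sqrt{d}\,G$ of the decay rate, leaving $18\euler\sqrt{d}\,G$ as the surviving rate for the hypothesis of Theorem~\ref{thm:result1}; summing the resulting geometric-type series produces a factor $1/(\kappa - 18\euler\sqrt{d}\,G)$, finite precisely under $G < \kappa/(18\euler\sqrt{d})$, and accounts for the term $G/(\kappa - 18\euler\sqrt{d}\,G)$ in the stated exponent.

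With the hypothesis of Theorem~\ref{thm:result1} thus verified with a new constant $D'$ whose logarithm satisfies $\ln D' \lesssim \ln\Db + G/(\kappa - 18\euler\sqrt{d}\,G)$, an application of that theorem produces an exponent of the shape $N\bigl(1 + G^{4/3}\lVert V\rVert_\infty^{2/3} + \ln D'\bigr)$. Substituting the bound on $\ln D'$ and absorbing all remaining dimensional prefactors into a single $\Nb = \Nb(d)$ yields the exponent $\Nb\bigl(1 + G^{4/3}\lVert V\rVert_\infty^{2/3} + \ln\Db + G/(\kappa - 18\euler\sqrt{d}\,G)\bigr)$, and hence \eqref{eq:result1} with precisely the constant $C_\sfuc^\Cb$ asserted in the theorem. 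The main obstacle, and the novelty over \cite{NakicTTV}, is thus concentrated entirely in Lemma~\ref{lemma:B->A}: converting tail-decay into coefficient-decay while exchanging only a controlled, $L$-free amount of the rate $\kappa$.
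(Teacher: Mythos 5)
Your overall architecture is exactly the paper's: reduce \eqref{eq:Assumption_B} to the coefficient condition \eqref{eq:Assumption_A} via Lemma~\ref{lemma:B->A}, apply Theorem~\ref{thm:result1}, and track how the excess decay rate $\epsilon = \kappa - 18\euler\sqrt{d}\,G$ enters the final exponent as the term $G/(\kappa - 18\euler\sqrt{d}\,G)$ after rescaling to $G=1$. That part is correct and is the intended reduction.

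However, the mechanism you describe inside Lemma~\ref{lemma:B->A} contains a genuine gap. You propose to pass from slice masses to \emph{coefficient-wise} bounds and then resum, which forces you to count eigenvalues per slice; you then assert that the resulting Weyl-type factor of order $L^d(\lambda+\lVert V_-\rVert_\infty)^{d/2}$ can be ``absorbed into a portion of the decay rate.'' It cannot: that factor is polynomial in $L$ uniformly in $\lambda$, so no sacrifice of the $\lambda$-decay rate removes it, and the constant $\Da$ produced this way would grow like $L^d$, destroying precisely the scale-free character the theorem asserts. The paper avoids counting altogether, because Assumption~\eqref{eq:Assumption_A} is a \emph{summed} condition: one performs an Abel/telescoping summation directly against the tails, pairing the increment $\euler^{C_2 l\pi/L}-\euler^{C_2(l-1)\pi/L}\le (C_2\pi/L)\,\euler^{C_2 l\pi/L}$ with the tail bound $\lVert\chi_{I_l}(H_L)\phi\rVert^2\le \Db\,\euler^{-(C_2+\epsilon)(l-1)\pi/L}\lVert\phi\rVert^2$; the factor $\pi/L$ from the increments exactly compensates the roughly $L$ slices per unit of $\sqrt{\lambda}$, and the geometric series yields $(C_2\pi/L)(1-\euler^{-\epsilon\pi/L})^{-1}\le C_2\pi(1-\euler^{-\epsilon\pi})^{-1}$, which is $L$-independent and contributes the $1/\epsilon$ to $\ln\Da$. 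Your opening observation in that paragraph (reading off slice masses from differences of consecutive tails) is the correct starting point; you should stay with the summed quantity rather than detouring through coefficient-wise bounds and eigenvalue counting. With that correction, and the rescaling $y\mapsto Gy$ for general $G$ (which you use implicitly but never state), the argument closes as in the paper.
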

For every measurable and bounded $V:\RR^d \to \RR$ and every $L \in \NN$ we denote the eigenvalues of the corresponding operator $H_L$ by $E_k$, $k \in \NN$, enumerated in increasing order and counting multiplicities, and fix a corresponding sequence $\psi_k$, $k \in \NN$, of normalized eigenfunctions. Note that we suppress the dependence of $E_k$ and $\psi_k$ on $V$ and $L$. For $\phi \in L^2 (\Lambda_L)$ we set $\alpha_k = \langle \psi_k , \phi \rangle$, whence
\[
 \phi = \sum_{k \in \NN} \alpha_k \psi_k .
\]
\begin{theorem}\label{thm:result1}
There is $\Na = \Na (d) > 0$ such that for
all $\kappa > 0$,
all $G \in (0, \kappa/ ( 18 \euler \sqrt{d})]$,
all $\delta \in (0,G/2)$,
all $(G,\delta)$-equidistributed sequences $Z$,
all measurable and bounded $V: \RR^d \to \RR$, all $L \in G \NN$,
all $\Da \geq 1$ and all $\phi \in L^2 (\Lambda_L)$ satisfying
\begin{equation}\label{eq:Assumption_A}
 \sum_{k\in \NN} \exp \left( \kappa \sqrt{\max\{0,E_k\}} \right) \lvert \alpha_k \rvert^2 \leq \Da \sum_{k \in \NN} \lvert \alpha_k \rvert^2 ,
\end{equation}
we have
\begin{equation*}
\lVert \phi \rVert_{W_\delta (L)}^2
\geq C_\sfuc^{\Ca} \lVert \phi \rVert_{\Lambda_L}^2
\end{equation*}
where
\[
 C_\sfuc^{\Ca} = C_{\sfuc}^{\Ca} (d, G, \delta ,  \Da  , \lVert V \rVert_\infty ) := \left( \frac{\delta}{G} \right)^{\Na \bigl(1 + G^{4/3}\lVert V \rVert_\infty^{2/3} +  \ln \Da \bigr)}.
\]
\end{theorem}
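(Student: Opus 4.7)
The strategy is to spectrally truncate $\phi$ at a threshold $b \geq 0$ to be chosen, apply the (scale-free) finite-dimensional quantitative unique continuation principle of \cite{NakicTTV} to the low-energy part, and control the high-energy tail via the exponential decay hypothesis~\eqref{eq:Assumption_A}. Write $\phi = \phi_{\leq} + \phi_{>}$ with $\phi_{\leq} = \sum_{E_k \leq b} \alpha_k \psi_k$, so that $\phi_{\leq} \in \operatorname{Ran} \chi_{(-\infty, b]}(H_L)$. After rescaling the main result of \cite{NakicTTV} to the $(G,\delta)$-equidistributed setting one obtains
\[
\lVert \phi_{\leq} \rVert^2_{W_\delta(L)} \geq C(b) \lVert \phi_{\leq} \rVert^2_{\Lambda_L}, \qquad C(b) = \left( \frac{\delta}{G} \right)^{M \bigl(1 + G^{4/3} \lVert V \rVert_\infty^{2/3} + G\sqrt{b}\bigr)},
\]
for some $M = M(d) > 0$, whereas the assumption immediately gives
\[
\lVert \phi_{>} \rVert^2_{\Lambda_L} = \sum_{E_k > b} \lvert \alpha_k \rvert^2 \leq \euler^{-\kappa \sqrt{b}} \sum_{k \in \NN} \euler^{\kappa \sqrt{\max\{0,E_k\}}} \lvert \alpha_k \rvert^2 \leq \Da \, \euler^{-\kappa \sqrt{b}} \lVert \phi \rVert^2_{\Lambda_L}.
\]

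The two pieces are combined via the reverse triangle inequality, together with the orthogonality of $\phi_{\leq}$ and $\phi_{>}$ in $L^2(\Lambda_L)$:
\[
\lVert \phi \rVert_{W_\delta(L)} \geq \lVert \phi_{\leq} \rVert_{W_\delta(L)} - \lVert \phi_{>} \rVert_{\Lambda_L} \geq \sqrt{C(b)} \, \lVert \phi_{\leq} \rVert_{\Lambda_L} - \sqrt{\Da} \, \euler^{-\kappa\sqrt{b}/2} \lVert \phi \rVert_{\Lambda_L}.
\]
Squaring with the elementary $(a-b)^2 \geq a^2/2 - b^2$ and inserting $\lVert \phi_{\leq} \rVert^2_{\Lambda_L} \geq (1 - \Da \euler^{-\kappa\sqrt{b}}) \lVert \phi \rVert^2_{\Lambda_L}$ yields a useful lower bound on $\lVert \phi \rVert^2_{W_\delta(L)}$ as soon as the tail $\Da \euler^{-\kappa\sqrt{b}}$ is sufficiently small compared to $C(b)$.

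It remains to select the threshold. I would choose $\sqrt{b} = c_0 (1 + \ln \Da)/\kappa$ with $c_0 = c_0(d)$ large enough to force, say, $\Da \euler^{-\kappa\sqrt{b}} \leq C(b)/8$. The hypothesis $G \leq \kappa/(18 \euler \sqrt{d})$ then guarantees $G\sqrt{b} \leq c_0 (1 + \ln \Da)/(18 \euler \sqrt{d})$, a universal multiple of $1 + \ln \Da$, so that the exponent of $\delta/G$ in $C(b)$ remains of the announced form $\Na (1 + G^{4/3}\lVert V\rVert_\infty^{2/3} + \ln \Da)$ after absorbing all dimensional constants into a single $\Na = \Na(d)$.

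The main obstacle will be this balancing step. The tail $\Da \euler^{-\kappa\sqrt{b}}$ must be small relative to the already small quantity $C(b)$, not merely relative to $1$, and $C(b)$ itself degrades with $\sqrt{b}$ through $G\sqrt{b}$. The specific numerical factor $18 \euler \sqrt{d}$ in the hypothesis on $G$ is presumably tuned against the dimensional constant $M$ from \cite{NakicTTV} so that the contribution of $G\sqrt{b}$ to $\ln C(b)^{-1}$ is dominated by $\kappa \sqrt{b}$ throughout the relevant range, thereby preventing spurious $\ln(G/\delta)$-terms from entering the final exponent and keeping $C_{\sfuc}^{\Ca}$ in the claimed form.
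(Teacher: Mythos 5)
Your truncation-and-tail strategy has a genuine gap precisely at the step you flag as ``the main obstacle,'' and the hypothesis $G \leq \kappa/(18\euler\sqrt{d})$ cannot repair it. Writing $A = 1 + G^{4/3}\lVert V\rVert_\infty^{2/3}$, you need $\Da\,\euler^{-\kappa\sqrt{b}} \lesssim C(b) = (\delta/G)^{M(A + G\sqrt{b})}$, i.e.
\[
\kappa\sqrt{b} - M G \sqrt{b}\,\ln(G/\delta) \;\gtrsim\; \ln \Da + M A \ln(G/\delta).
\]
The coefficient of $\sqrt{b}$ on the left is $\kappa - MG\ln(G/\delta)$. The relation between $\kappa$ and $G$ in the theorem is independent of $\delta$, whereas $\ln(G/\delta)$ is unbounded as $\delta \to 0$; so for $\delta < G\,\euler^{-\kappa/(MG)}$ the left-hand side is nonincreasing in $b$ while at $b=0$ the required inequality already fails (the tail bound is $\Da \geq 1$, but $C(0) < 1$). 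Hence no threshold $b$ works in that regime. Even in the regime where a threshold exists, solving for $b$ produces an exponent containing $\ln(G/\delta)\cdot\ln\Da/(\kappa - MG\ln(G/\delta))$, i.e.\ an extra factor of $\ln(G/\delta)$ beyond the claimed form $\Na(1 + G^{4/3}\lVert V\rVert_\infty^{2/3} + \ln\Da)$. Your concrete choice $\sqrt{b} = c_0(1+\ln\Da)/\kappa$ fares no better: the requirement becomes $(c_0-1)\ln\Da + c_0 \gtrsim M(A+G\sqrt{b})\ln(G/\delta)$, whose right-hand side grows without bound in $\lVert V\rVert_\infty$ and $\ln(1/\delta)$ while the left-hand side does not.

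The paper avoids truncation altogether. It feeds the partial sums $\phi_n = \sum_{k\leq n}\alpha_k\psi_k$ into the ghost-dimension function $F_n$ and the interpolation inequalities of Propositions~\ref{prop:interpolation1} and~\ref{prop:interpolation2}; the only place the high energies enter is the ratio $\lVert F_n\rVert^2_{H^1(\tilde X_{R_3})}/\lVert F_n\rVert^2_{H^1(X_1)}$, which Proposition~\ref{prop:upper_lower} bounds by $D_3^2\sum_k \euler^{2R_3\sqrt{\max\{0,E_k\}}}\lvert\alpha_k\rvert^2/\sum_k\lvert\alpha_k\rvert^2$ with the fixed, $\delta$-independent rate $2R_3 = 18\euler\sqrt{d}$. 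Assumption~\eqref{eq:Assumption_A} with $\kappa \geq 2R_3$ controls exactly this weighted sum by $\Da\sum_k\lvert\alpha_k\rvert^2$, so the high frequencies cost only a multiplicative factor $\Da^{-2/\gamma} \geq \delta^{K\ln\Da}$ in the final constant --- which is where the $\ln\Da$ in the exponent comes from. In other words, the high-energy content must be retained and paid for inside the Carleman/interpolation machinery (at the fixed rate $2R_3$), not discarded as an $L^2$ tail, where each unit of $\sqrt{b}$ costs $\ln(G/\delta)$. To fix your argument you would need to replace the spectral cutoff by this mechanism (or by Lemma~\ref{lemma:B->A}-type resummation feeding into it); the tail-versus-$C(b)$ comparison cannot be won by choosing $b$.
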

A special case of Theorem~\ref{thm:result2} and~\ref{thm:result1} is $\phi \in \mathrm{Ran} (\chi_{(-\infty,b]}(H_L))$ for some $b \geq - \lVert V_- \rVert_\infty$.
Let us assume $G = 1$ for convenience.
In this case Inequality~\eqref{eq:Assumption_A} holds with $\kappa = 18 \euler \sqrt{d}$ and $\Da = \exp(18 \euler \sqrt{d} \sqrt{b + \lVert V_- \rVert_\infty})$. Inequality~\eqref{eq:Assumption_B} holds, e.g., with $\kappa = 18 \euler \sqrt{d} + 1$ and $\Db = \exp((18 \sqrt{d} \euler + 1) \sqrt{b + \lVert V_- \rVert_\infty})$.
Hence, the constants $C_\sfuc^{\Ca, \Cb}$ in Theorem~\ref{thm:result1} and \ref{thm:result2} can  be estimated as
\[
 C_\sfuc^{\Ca} \geq \delta^{\tilde \Na \bigl(1 + \lVert V \rVert_\infty^{2/3} + \sqrt{\lvert b \rvert} \bigr)},
 \quad
 \text{and}
 \quad
 C_\sfuc^{\Cb} \geq \delta^{\tilde \Nb \bigl(1 + \lVert V \rVert_\infty^{2/3} + \sqrt{\lvert b \rvert} \bigr)},
\]
with $\tilde \Na$ and $\tilde \Nb$ depending only on the dimension. This way we recover the original result of \cite{NakicTTV}.
\begin{remark}[Relation between $\kappa$ and $G$]
 In Theorem~\ref{thm:result2} and~\ref{thm:result1}, the parameters $\kappa$ (decay of high energies) and $G$ (grid size) are subject to the relation $G / \kappa \leq 18 \euler \sqrt{d}$ or $G / \kappa < 18 \euler \sqrt{d}$, respectively.
 This is in accordance with the intuition of uncertainty principles: delocalization in momentum space (large $\kappa$) corresponds to localization in position space, i.e.\ a fine grid (small $G$) is required in order to obtain an estimate as in Ineq.~\eqref{eq:result1}.
 It also seems that the condition on $G$ and $\kappa$ appears naturally when using Carleman estimates to prove a scale-free quantitative unique continuation result as in Theorem~\ref{thm:result2} and \ref{thm:result1}. Indeed, a similar assumption is required in analogue results for solutions of variable coefficient second order elliptic operators with Lipschitz continuous coefficients, see \cite{BorisovTV-15c}. There, on a technical level the Lipschitz constant assumes the role of $1/\kappa$ from our setting and our condition turns into a smallness condition on the Lipschitz constant in the main result of \cite{BorisovTV-15c}.
 \par
 However, one could ask if a quantitative unique continuation principle as in Theorem~\ref{thm:result2} and Theorem~\ref{thm:result1} holds for every pair $(\kappa,G)$.
 An indication for this is Proposition~5.6 in \cite{RousseauL-12} where the following statement is proven in the special case $V \equiv 0$: \textit{Let $\omega \subset \Lambda_L$ be open and $\kappa > 0$. Then for all functions $u = \sum_{k \in \NN} \alpha_k \phi_k$ with $\lvert \alpha_k \rvert \leq \exp ( - \kappa \sqrt{E_k})$, $k \in \NN$, we have $u \equiv 0$ if $u|_{\omega} \equiv 0$.}
 Even though it would be possible without much effort to turn this qualitative into a quantitative statement of the form
 \[
  \lVert \phi \rVert_{\omega}^2 \geq C \lVert \phi \rVert_{\Lambda_L}^2,
 \]
 the method in \cite{RousseauL-12} does not provide any control over the constant $C$ in terms of $\delta$, $L$, and $\kappa$, which we study in this note.
 One possibility to treat arbitrary $\kappa$ and $G$ might be a so-called chaining argument, as used in \cite{DonnellyF-88,Kukavica-98,Bakri-13} in the context of quantitative uniqueness results and nodal sets for solutions of the Schr\"odinger equation.
 However, in order to obtain a strong dependence of $C_{\sfuc}$ on the parameters $\delta$ and $\lVert V \rVert_\infty$ as in Theorem~\ref{thm:result2} and \ref{thm:result1}, a direct adaptation of these chaining arguments to our setting might not be feasible.
 \end{remark}
\begin{remark}[Optimality]
\label{rem:optimality}
 As observed by Jerison and Lebeau in Proposition~14.9 of \cite{JerisonL-99}, the square root in the exponent of Ineq.~\eqref{eq:Assumption_B} and \eqref{eq:Assumption_A} is optimal.
 The exponent $2/3$ of $\lVert V \rVert_\infty$ in $C_\sfuc$, is known from Meshkov's example \cite{Meshkov-92} to be optimal in the case of eigenfunctions of Schr\"odinger operators with complex-valued potentials.
 It is an open question whether it can be improved for real-valued potentials.
 Another question is whether one can still expect results as in Theorem~\ref{thm:result2} and \ref{thm:result1} if the exponential functions in \eqref{eq:Assumption_B} and \eqref{eq:Assumption_A} are replaced by polynomials.
 We will show in Section~\ref{sec:optimality} that this is not the case. For this purpose, we show that every $\phi \in C_0^\infty(\Lambda_L)$ satisfies such a polynomial condition. Hence polynomial summability of the $\lvert \alpha_k \rvert^2$ does not imply such a quantitative unique continuation principle.
\end{remark}

%\newpage

\section{Proofs}
\label{sec:proofs}
\subsection{Ghost dimension and interpolation inequalities}

In this subsection we restate two interpolation inequalities from \cite{NakicTTV}, on which the proof of Theorem~\ref{thm:result1} relies. For more details we refer to \cite{NakicTTV}.
\par
Given a measurable and bounded $V:\RR^d \to \RR$ and $L \in \NN$ we define extensions of $V_L$ and of the eigenfunctions $\psi_k$ (defined on $\Lambda_L$) to a larger cube $\Lambda_{RL}$ where $R$ is the least odd integer larger than $18 \euler \sqrt{d} + 2$.
The type of the extension will depend on the boundary conditions, see\ \cite{NakicTTV}. In the case of
\begin{itemize}
 \item periodic boundary conditions we extend both $V$ and $\psi_k$ periodically.
 \item Dirichlet boundary conditions we extend $V$ iteratively by symmetric reflections with respect to the boundary of $\Lambda_L$, and $\psi_k$ by antisymmetric reflections.
 \item Neumann boundary conditions we extend both $V$ and $\psi_k$ iteratively by symmetric reflections with respect to the boundary of $\Lambda_L$.
\end{itemize}
We will use the same symbol for the extended $V_L$ and $\psi_k$. Note that $V_{L}: \Lambda_{RL} \to \RR$ takes values in $[-\lVert V \rVert_\infty, \lVert V \rVert_\infty]$, the extended $\psi_k$ are elements of $W^{2,2} (\Lambda_{R L})$ with corresponding boundary conditions, they satisfy the eigenvalue equation $\Delta \psi_k = (V_L - E_k) \psi_k$ on $\Lambda_{R L}$ and their orthogonality relations remain valid.
\par
For a measurable and bounded $V : \RR^d \to \RR$, $L \in \NN$ and $\phi \in L^2 (\Lambda_L)$ recall that $\alpha_k = \langle \psi_k , \phi \rangle$ whence $\phi = \sum_{k \in \NN} \alpha_k \psi_k$.
We set $\omega_k := \sqrt{\lvert E_k \rvert}$ and define for $n \in \NN$ the function $F_n : \Lambda_{R L} \times \RR \to \CC$ by
\begin{equation*} \label{eq:F}
 F_n (x , x_{d+1}) = \sum_{k =1}^n \alpha_k \psi_k (x) \funs_k( x_{d+1}) ,
\end{equation*}
where $s_k : \RR \to \RR$ is given by
\[
\funs_k(t)=\begin{cases}
	\sinh(\omega_k t)/\omega_k, & E_k>0,\\
	x, & E_k=0,\\
	\sin(\omega_k t)/\omega_k, & E_k<0.
\end{cases}
\]
Note that we suppress the dependence of $F_n$ on $V$, $L$ and $\phi$.
The function $F_n$ fulfills the handy relations
\begin{equation*} \label{eq:DeltaF}
\Delta F_n = \sum_{i=1}^{d+1} \partial^2_{i} F_n  =  V_L F_n \quad \text{on} \quad  \Lambda_{R L} \times \RR
\end{equation*}
and
\begin{equation*} \label{eq:F-phi}
\partial_{d+1} F_n (\cdot,0) = \sum_{k=1}^n \alpha_k \psi_k =: \phi_n \quad \text{on} \quad \Lambda_{R L}  .
\end{equation*}
In particular, we have $\lVert \partial_{d+1} F_n (\cdot,0) - \phi \rVert_{L^2 (\Lambda_{RL})} = \lVert \phi_n - \phi \rVert_{L^2 (\Lambda_{RL})} \to 0$ for $n \to \infty$.
\par

In the following, we recall that for $\Omega \subset \Lambda_{R L} \times \RR$
\[
 \lVert F_n \rVert_{H^1(\Omega)}^2
 =
 \lVert F_n \rVert_{L^2(\Omega)}^2
 +
 \sum_{i = 1}^{d+1} \lVert \partial_i F_n \rVert_{L^2(\Omega)}^2
\]
is the $1$-Sobolev norm.
Furthermore, in order to avoid confusion we now adapt the notation from \cite{NakicTTV} and define $\NN_{\mathrm{odd}} = \{1,3,5, \ldots\}$, $X_1 = \Lambda_L \times [-1,1]$, $R_3 = 9 \euler \sqrt{d}$, $\tilde X_{R_3} = \Lambda_{L + 2R_3} \times [-R_3 , R_3]$,
\begin{align*}
 S_1 &= \biggl\{x \in \RR^{d+1} \colon -x_{d+1} + \frac{x_{d+1}^2}{2} - \frac{\sum_{i=1}^d x_i^2}{4} > -\frac{\delta^2}{16} , x_{d+1}  \in [0,1] \biggr\} \subset \RR^{d+1}_+  , \\
 %\intertext{and}
 S_3 &= \biggl\{x \in \RR^{d+1} \colon -x_{d+1} + \frac{x_{d+1}^2}{2} - \frac{\sum_{i=1}^d x_i^2}{4} > -\frac{\delta^2}{4} , x_{d+1}  \in [0,1] \biggr\} \subset \RR^{d+1}_+ .
 \end{align*}
Moreover, for $L \in \NN $, a $(1,\delta)$-equidistributed sequence $Z$ and $i \in \{1,3\}$, we define the sets $U_i (L) = \cup_{j \in \ZZ^d \cap \Lambda_L} S_i (z_j)$. The following Propositions are variants of Propositions 3.4, 3.5 and 3.6 from \cite{NakicTTV}, see Remark~\ref{rem:Fn} below.

\begin{proposition} \label{prop:interpolation1}
For all $\delta \in (0,1/2)$, all $(1,\delta)$-equidistributed sequences $Z$, all measurable and bounded $V: \RR^d \to \RR$, all $L \in \NN_{\mathrm{odd}}$, all $n\in\NN$ and all $\phi \in L^2 (\Lambda_L)$ we have
\[
  \lVert  F_n \rVert_{H^1 (U_1 (L))} \leq D_1 \lVert (\partial_{d+1} F_n)_0 \rVert_{L^2 (W_\delta(L))}^{1/2} \lVert F_n \rVert_{H^1 (U_3 (L))}^{1/2} ,
\]
where
\begin{equation*} %\label{eq:C2}
   D_1^{-4} = \delta^{N_1 (1 + \lVert V \rVert_\infty^{2/3})}
\end{equation*}
and $N_1 = N_1 (d)$ is a constant depending on the dimension only.
\end{proposition}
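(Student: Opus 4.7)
The plan is to derive the inequality by proving a local three-region interpolation of Hadamard type around each grid point $z_j$ via a Carleman estimate, and then summing the local estimates across the equidistributed grid. The ghost dimension construction is essential: since $F_n$ solves $\Delta F_n = V_L F_n$ on $\Lambda_{RL}\times\RR$ with $F_n(\cdot,0) = 0$ (because each $\funs_k(0)=0$) and $\partial_{d+1}F_n(\cdot,0)=\phi_n$, the trace $(\partial_{d+1}F_n)_0$ plays the role of the Cauchy datum on $\{x_{d+1}=0\}$, and $F_n$ can be treated as a genuine solution of a bounded-potential Schr\"odinger equation in $d+1$ variables. The extensions of $V_L$ and the $\psi_k$ ensure that $F_n$ satisfies this equation on a neighbourhood of every translate $S_3(z_j)$, since for $L\in\NN_{\mathrm{odd}}$ and $z_j\in\Lambda_L$ one has $S_3(z_j)\subset\Lambda_{RL}\times[-R_3,R_3]$.

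Next, I would invoke a Carleman estimate with a weight adapted to the quadratic function
\[
 w(x) = -x_{d+1} + \frac{x_{d+1}^2}{2} - \frac{1}{4}\sum_{i=1}^d x_i^2 ,
\]
that is, the same function whose sub-level sets define $S_1$ and $S_3$. The estimate yields, after translation to $z_j$, a three-region inequality of the form
\[
 \lVert F_n\rVert_{H^1(S_1(z_j))} \leq D_1 \lVert (\partial_{d+1}F_n)_0\rVert_{L^2(B(z_j,\delta))}^{1/2}\lVert F_n\rVert_{H^1(S_3(z_j))}^{1/2}.
\]
The levels $-\delta^2/16$ and $-\delta^2/4$ are chosen precisely so that the geometric gap between $S_1(z_j)$ and $S_3(z_j)$ gives a quantitative exponential separation of the Carleman weight, while the ball $B(z_j,\delta)$ on $\{x_{d+1}=0\}$ sits in the ``small datum'' region. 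The potential term $\lVert V \rVert_\infty\, \lVert F_n\rVert$ produced by the equation $\Delta F_n = V_L F_n$ is absorbed into the Carleman left-hand side at the cost of requiring $\tau \gtrsim \lVert V\rVert_\infty^{2/3}$; optimizing this large parameter reproduces the classical Meshkov scaling and yields the constant $D_1^{-4} = \delta^{N_1(1+\lVert V\rVert_\infty^{2/3})}$.

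Finally, I would sum the local inequalities over $j \in \ZZ^d\cap\Lambda_L$. By construction of a $(1,\delta)$-equidistributed sequence the balls $B(z_j,\delta)$ are pairwise disjoint and lie in $\Lambda_L$, so the $L^2$-contributions on the right simply assemble into $\lVert(\partial_{d+1}F_n)_0\rVert_{L^2(W_\delta(L))}^2$; similarly, the sets $S_i(z_j)$ have uniformly bounded overlap, so the $H^1$-norms on each side sum up (with an overlap factor absorbed into $N_1$) to norms over $U_1(L)$ and $U_3(L)$. A single application of Cauchy--Schwarz in the index $j$ converts the sum of products $a_j^{1/2}b_j^{1/2}$ on the right into the product of square roots of the summed norms.

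The main obstacle is the Carleman estimate itself: it must be formulated in the ghost $(d{+}1)$-dimensional setting with the trace term on $\{x_{d+1}=0\}$, must absorb $V_L F_n$ against the weight, and must give the exact $2/3$-power dependence on $\lVert V\rVert_\infty$. Since $F_n$ is a finite linear combination of the same extended eigenfunctions used in \cite{NakicTTV}, it enjoys the same regularity and boundary behaviour as their finite-dimensional $F$, so once the Carleman estimate is in place the remainder of the argument is a verbatim translation of Propositions~3.4--3.6 of \cite{NakicTTV} to the partial sum $F_n$; in particular, no estimate depends on $n$, which is what later allows passage to the limit.
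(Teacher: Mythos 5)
Your proposal is correct and follows essentially the same route as the paper: the paper itself gives no independent proof but, in Remark~\ref{rem:Fn}, observes that Propositions~3.4--3.6 of \cite{NakicTTV} (the Carleman-weight three-region interpolation around each $z_j$, summed over the grid) apply verbatim once the index set $\{k : E_k \le b\}$ is replaced by the arbitrary finite set $\{1,\dots,n\}$, since none of the estimates there depend on which finite collection of extended eigenfunctions enters $F_n$. Your sketch of the underlying Carleman argument, the absorption of $V_L F_n$ via $\tau \gtrsim \lVert V\rVert_\infty^{2/3}$, and the summation with Cauchy--Schwarz over $j$ accurately reflects the cited proof.
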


\begin{proposition} \label{prop:interpolation2}
For all $\delta \in (0,1/2)$, all $(1,\delta)$-equidistributed sequences $Z$, all measurable and bounded $V: \RR^d \to \RR$, all $L \in \NN_{\mathrm{odd}}$, all $n \in \NN$ and all $\phi \in L^2 (\Lambda_L)$
we have
 \[
\lVert F_n \rVert_{H^1 (X_1)}
\leq D_2 \lVert F_n \rVert_{H^1 (U_1 (L))}^{\gamma} \lVert F_n \rVert_{H^1 (\tilde X_{\rout_3})}^{1- \gamma} ,
\]
where
\begin{equation*} %\label{eq:gamma}
\gamma = \left( \log_2 \left( \frac{6\euler\sqrt{d}}{\frac{1}{2} - \frac{1}{8}\sqrt{16-\delta^2}} \right) \right)^{-1} , \quad D_2^{-4/\gamma} = \delta^{N_2 (1+ \lVert V \rVert_\infty^{2/3})} ,
\end{equation*}
and $N_2 = N_2 (d)$ is a constant depending on the dimension only.
\end{proposition}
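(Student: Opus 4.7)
The plan is to derive this three-ball style interpolation from a Carleman inequality for the $(d+1)$-dimensional elliptic equation $\Delta F_n = V_L F_n$ that $F_n$ satisfies on the ghost-augmented cube $\Lambda_{RL}\times \RR$, followed by a geometric chaining argument. This is exactly the strategy used for the corresponding Proposition~3.5 of \cite{NakicTTV}; since the statement only involves the finite approximant $F_n$, the whole machinery carries over, and the role of the present proposition in the paper will be to pass the estimate to the infinite sum $\phi$ by a separate limiting argument at a later stage.

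First I would establish a local three-ball inequality. Fix a reference point $y$ inside some $S_1(z_j)$ and plug $F_n$ into a Carleman estimate with a radial weight $\exp(-\tau \varphi(\lvert x-y\rvert))$ for a suitable monotone $\varphi$. Using $\lvert \Delta F_n\rvert \leq \lVert V\rVert_\infty \lvert F_n\rvert$ and optimising in the large parameter $\tau$ yields, with the classical Carleman $2/3$-scaling,
\[
 \lVert F_n\rVert_{H^1(B_{r_2}(y))} \leq C\, \lVert F_n\rVert_{H^1(B_{r_1}(y))}^{\theta}\, \lVert F_n\rVert_{H^1(B_{r_3}(y))}^{1-\theta}
\]
for concentric balls $B_{r_1}\subset B_{r_2}\subset B_{r_3}$, where $\theta\in (0,1)$ depends only on the radius ratios and $C \le \exp(c(1+\lVert V\rVert_\infty^{2/3}))$. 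The innermost radius is chosen comparable to the characteristic scale $\tfrac12-\tfrac18\sqrt{16-\delta^2}$ of $S_1$ in the $x_{d+1}$-direction, so that $B_{r_1}(y)\subset S_1(z_j)$.

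Next I would chain this inequality inside each unit cell. Since $R=$ least odd integer exceeding $18\euler\sqrt{d}+2$ and $\tilde X_{R_3}=\Lambda_{L+2R_3}\times[-R_3,R_3]$ with $R_3=9\euler\sqrt{d}$, each unit cell $(\Lambda_1+j)\times[-1,1]\subset X_1$ admits a chain of $k$ overlapping balls of geometrically increasing radii, starting inside $S_1(z_j)$ and ending in a ball of radius $R_3$ contained in $\tilde X_{R_3}$. Iterating the three-ball inequality along the chain gives, cellwise,
\[
 \lVert F_n\rVert_{H^1((\Lambda_1+j)\times[-1,1])} \leq D\, \lVert F_n\rVert_{H^1(S_1(z_j))}^{\gamma}\, \lVert F_n\rVert_{H^1(\tilde X_{R_3})}^{1-\gamma} ,
\]
with $\gamma=\theta^k$. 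Matching the outer-to-inner radius ratio to $6\euler\sqrt{d}/(\tfrac12-\tfrac18\sqrt{16-\delta^2})$ produces exactly the explicit $\log_2$ expression for $\gamma$ stated in the proposition. Summing over $j\in\ZZ^d\cap\Lambda_L$ by H\"older's inequality $\sum_j a_j^\gamma b_j^{1-\gamma}\leq(\sum_j a_j)^{\gamma}(\sum_j b_j)^{1-\gamma}$, together with disjointness of the $S_1(z_j)$ and the uniformly finite overlap multiplicity of the enlarged cells in $\tilde X_{R_3}$, yields the claimed global inequality.

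The main obstacle is keeping $D_2$ scale-free and of the exact form $D_2^{-4/\gamma}=\delta^{N_2(1+\lVert V\rVert_\infty^{2/3})}$. Only the very first step of the chain, the one whose inner ball has scale of order $\delta^2$, produces a $\delta$-dependent Carleman constant, and this constant is raised to the power $\theta^{k}=\gamma$ by the iteration, which accounts for the exponent $4/\gamma$ on the left-hand side; all subsequent steps involve only fixed geometric ratios depending on $d$, $R_3$ and $1$, and therefore contribute constants independent of $L$ and $\delta$. The $L$-independence is then automatic because the chaining in every unit cell is an isometric translate of a single fixed chain.
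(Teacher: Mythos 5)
Your overall framing is right in one important respect: the paper does not reprove this proposition but imports it from Proposition~3.5 of \cite{NakicTTV}, and the only genuinely new content here is the observation recorded in Remark~\ref{rem:Fn} that the Carleman-based proof there never uses the specific index set $\{k : E_k \leq b\}$ and therefore applies verbatim to the finite sums $F_n$. You identified this, as well as the correct overall architecture (a per-cell interpolation estimate for the lifted equation $\Delta F_n = V_L F_n$, summed over lattice sites by H\"older, with finite overlap of the enlarged cells keeping the estimate $L$-free).

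However, the mechanism you propose for producing $\gamma$ is quantitatively incompatible with the stated constant, and this is not cosmetic, since the explicit form of $\gamma$ is the entire point of the proposition. Iterating a three-ball inequality along a chain of $k$ balls multiplies the exponents, so your $\gamma = \theta^k$ with $k \approx \log_2(r_{\mathrm{out}}/r_{\mathrm{in}})$ gives $\gamma \approx (r_{\mathrm{out}}/r_{\mathrm{in}})^{-\log_2(1/\theta)}$, i.e.\ \emph{polynomially} small in the inner scale $\tfrac12 - \tfrac18\sqrt{16-\delta^2} \approx \delta^2/64$, hence a positive power of $\delta$. The stated $\gamma = \bigl(\log_2\bigl(6\euler\sqrt d/(\tfrac12-\tfrac18\sqrt{16-\delta^2})\bigr)\bigr)^{-1}$ is only \emph{logarithmically} small in $1/\delta$ and has exactly the form $\ln(\rho'/\rho)/\ln(\rho_3/\rho_1)$ of a \emph{single} sharp three-region interpolation in which one pair of scales differs by a factor $2$; no product $\theta^k$ of per-step exponents can reproduce it, so your claim that matching the radius ratio ``produces exactly the explicit $\log_2$ expression'' does not hold. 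The proof in \cite{NakicTTV} indeed applies an Escauriaza--Vessella-type Carleman estimate \emph{once} per lattice site $z_j$, with the three regions realized as sub-level sets of the Carleman weight --- this is precisely why $S_1$ and $S_3$ are the paraboloid-shaped sets defined in the paper rather than balls --- and $\gamma$ arises as a ratio of logarithms of weight values. With $\gamma = \theta^k$ the resulting constant $C_\sfuc^{\Ca}$ in Theorem~\ref{thm:result1} would no longer be of the form $\delta^{N(1+\cdots)}$. Relatedly, your bookkeeping for $D_2$ (``only the first step contributes a $\delta$-dependent constant, raised to the power $\gamma$'') is asserted rather than derived and does not visibly produce $D_2^{-4/\gamma} = \delta^{N_2(1+\lVert V\rVert_\infty^{2/3})}$.
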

\begin{proposition} \label{prop:upper_lower}
For all $T > 0$, all measurable and bounded $V : \RR^d \to \RR$, all $L \in \NN_{\mathrm{odd}}$, all $n \in \NN$ and all $\phi \in L^2 (\Lambda_L)$ we have
\[
\frac{T}{2} \sum_{k=1}^n \lvert \alpha_k \rvert^2
\leq
\frac{\lVert F_n \rVert_{H^1 (\Lambda_{R L} \times [-T,T])}^2}{R^d}
\leq
2 T (1 + (1+\lVert V \rVert_\infty)T^2) \sum_{k=1}^n \beta_k(T)  \lvert \alpha_k \rvert^2 ,
\]
where
\[
 \beta_k(T) = \begin{cases}
	      1 & \text{if}\ E_k \leq 0,  \\
	      \mathrm{e}^{2T \sqrt{E_k} } & \text{if} \ E_k > 0 . \\
           \end{cases}
\]
\end{proposition}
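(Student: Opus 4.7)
The plan is to combine the orthogonality of the eigenfunctions $\{\psi_k\}$ on the extended cube $\Lambda_{RL}$ with integration by parts based on the equation $\Delta F_n = V_L F_n$. Because the extension of $\psi_k$ from $\Lambda_L$ to $\Lambda_{RL}$ is built from $R^d$ reflected or translated copies of $\Lambda_L$, and because in the products $\psi_k \overline{\psi_j}$ and $\nabla\psi_k \cdot \overline{\nabla\psi_j}$ the reflection signs cancel in pairs, an iterated change of variables yields
\[
\int_{\Lambda_{RL}} \psi_k\overline{\psi_j}\,dx = R^d\,\delta_{kj}, \qquad \int_{\Lambda_{RL}} \nabla\psi_k\cdot\overline{\nabla\psi_j}\,dx = R^d \int_{\Lambda_L} \nabla\psi_k\cdot\overline{\nabla\psi_j}\,dx ,
\]
in all three boundary-condition cases. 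Inserting the product structure of $F_n$ into $\lVert F_n\rVert_{L^2(\Lambda_{RL}\times[-T,T])}^2$ and $\lVert\partial_{d+1}F_n\rVert_{L^2(\Lambda_{RL}\times[-T,T])}^2$, the $\delta_{kj}$ gives the diagonal formulas
\[
\lVert F_n\rVert_{L^2}^2 = R^d \sum_{k=1}^n \lvert\alpha_k\rvert^2 \int_{-T}^T \funs_k(t)^2\,dt ,\qquad \lVert\partial_{d+1}F_n\rVert_{L^2}^2 = R^d \sum_{k=1}^n \lvert\alpha_k\rvert^2 \int_{-T}^T \funs_k'(t)^2\,dt .
\]

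For the purely spatial derivatives, orthogonality alone is inconvenient because $\langle\nabla\psi_k,\nabla\psi_j\rangle_{\Lambda_L}$ is not diagonal; instead I would integrate by parts on $\Lambda_{RL}\times[-T,T]$ using $\Delta F_n = V_L F_n$. The surface integral on $\partial\Lambda_{RL}\times[-T,T]$ vanishes in each case: for periodic BC, $F_n$ is $RL$-periodic in the Euclidean variables and opposite faces cancel; for Dirichlet BC, $R$ odd places $\pm RL/2$ on an antisymmetry axis so that $F_n\lvert_{\partial\Lambda_{RL}} = 0$; for Neumann BC, the iterated symmetric extension forces $\partial_\nu\psi_k = 0$ on $\partial\Lambda_{RL}$. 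At the caps $\Lambda_{RL}\times\{\pm T\}$, oddness of $\funs_k$ and evenness of $\funs_k'$ together with the above orthogonality produce exactly $2R^d \sum_k \lvert\alpha_k\rvert^2 \funs_k(T) \funs_k'(T)$. Adding $\lVert F_n\rVert_{L^2}^2$ then gives the master identity
\[
\lVert F_n\rVert_{H^1(\Lambda_{RL}\times[-T,T])}^2 = \int_{\Lambda_{RL}\times[-T,T]} (1 - V_L)\lvert F_n\rvert^2 \,dx\,dt + 2R^d \sum_{k=1}^n \lvert\alpha_k\rvert^2 \funs_k(T)\funs_k'(T).
\]

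Both bounds then reduce to elementary one-variable estimates on $\funs_k$. For the upper bound, $\sinh x \leq x\cosh x$ for $x\geq 0$ and $\lvert\sin x\rvert\leq \lvert x\rvert$ yield $\lvert\funs_k(t)\rvert \leq \lvert t\rvert\,\euler^{\omega_k\lvert t\rvert}$ when $E_k > 0$ and $\lvert\funs_k(t)\rvert \leq \lvert t\rvert$ otherwise, whence $\int_{-T}^T \funs_k(t)^2\,dt \leq 2T^3 \beta_k(T)$ and $\lvert\funs_k(T)\funs_k'(T)\rvert \leq T\,\beta_k(T)$; combining with $1 - V_L \leq 1 + \lVert V\rVert_\infty$ and dividing by $R^d$ produces the claimed upper bound. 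The lower bound follows from $\lVert F_n\rVert_{H^1}^2 \geq \lVert\partial_{d+1}F_n\rVert^2$ once one verifies $\int_{-T}^T \funs_k'(t)^2\,dt \geq T/2$ for every $k$: trivial for $E_k \geq 0$ since $\funs_k' \geq 1$, and for $E_k < 0$ by a case split on whether $2\omega_k T \leq \pi$ (so $\sin(2\omega_k T) \geq 0$ and the integral is $\geq T$) or $2\omega_k T > \pi$ (so $\lvert\sin(2\omega_k T)/(2\omega_k)\rvert < T/\pi$, hence the integral exceeds $T(1 - 1/\pi) > T/2$). The main obstacle is the uniform bookkeeping of the spatial boundary integral on $\partial\Lambda_{RL}\times[-T,T]$ across the three boundary conditions; once the master identity is in place, the remaining one-variable estimates are routine.
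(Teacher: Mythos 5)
Your proof is correct, and it follows essentially the same strategy as the one in \cite{NakicTTV} to which the paper delegates: diagonalize $\lVert F_n \rVert_{L^2}^2$ and $\lVert \partial_{d+1} F_n \rVert_{L^2}^2$ via the $R^d$-fold orthogonality of the extended eigenfunctions, replace $\sum_{i=1}^{d} \lVert \partial_i F_n \rVert_{L^2}^2$ by $-\int V_L |F_n|^2$ plus the cap terms through Green's identity and $\Delta F_n = V_L F_n$, verify the lateral boundary integral vanishes for each boundary condition, and then reduce both bounds to the elementary estimates on $\funs_k$ and $\funs_k'$ that you give.
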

\begin{remark} \label{rem:Fn}
The counterparts of Propositions \ref{prop:interpolation1}, \ref{prop:interpolation2} and \ref{prop:upper_lower} in \cite{NakicTTV} are formulated with $\phi \in \operatorname{Ran} \chi_{(-\infty , b]} (H_L)$ instead of $\phi \in L^2 (\Lambda_L)$, and
\begin{equation}\label{eq:Fb}
F^b(x,x_{d+1}) := \sum_{\genfrac{}{}{0pt}{2}{k \in \NN}{E_k \leq b}} \alpha_k \psi_k (x) \funs_k( x_{d+1}).
\end{equation}
instead of $F_n$.
However, the proofs in \cite{NakicTTV} do not depend on the particular choice of the index set $\{k \in \NN \colon E_k \leq b \}$ and apply to arbitrary finite index sets as well.
\end{remark}
\subsection{Proof of Theorem~\ref{thm:result1}}
First we consider the case $G = 1$, $\kappa \geq 18\euler\sqrt{d}$, and $L \in \NN_{\mathrm{odd}}$. We note that Proposition~\ref{prop:upper_lower} remains true if we replace $R$ by $1$, i.e.\ for all $T > 0$, $n \in \NN$ and $L \in \NN_{\mathrm{odd}}$ we have
\begin{equation}\label{eq:lower}
\frac{T}{2} \sum_{k=1}^n   \lvert \alpha_k \rvert^2 \leq \lVert F_n \rVert_{H^1 (\Lambda_{L} \times [-T,T])}^2
\leq 2 T (1 + (1+\lVert V \rVert_\infty)T^2) \sum_{k=1}^n  \beta_k(T)  \lvert \alpha_k \rvert^2 .
\end{equation}
We have $\tilde X_{\rout_3} \subset \Lambda_{R L} \times [-R_3 , R_3]$. By Ineq.~\eqref{eq:lower} and Proposition~\ref{prop:upper_lower} we have
\begin{align*}
\frac{\lVert F_n \rVert_{H^1 (\tilde X_{\rout_3})}^2 }{\lVert F_n \rVert_{H^1 (X_1)}^2 }
\leq
\frac{\lVert F_n \rVert_{H^1 (\Lambda_{R L} \times [-\rout_3, \rout_3])}^2 }{\lVert F_n \rVert_{H^1 (X_1)}^2}
\leq \frac{\sum_{k=1}^n \theta_k  \lvert \alpha_k \rvert^2}{\sum_{k=1}^n  \lvert \alpha_k \rvert^2} D_3^2
\end{align*}
where $D_3^2 = 4 R^d \rout_3 (1 + (1+\lVert V \rVert_\infty)\rout_3^2)$
and $\theta_k = \beta_k (\rout_3)$.
Now note that $\kappa \geq 18 \euler \sqrt{d} = 2R_3$. Therefore, Assumption \eqref{eq:Assumption_A} yields
\begin{align*}
\sum_{k\in\NN} \theta_k \lvert \alpha_k \rvert^2
&=
 \sum_{k\in \NN} \exp \left( 2R_3 \sqrt{\max\{0,E_k\}} \right) \lvert \alpha_k \rvert^2 \\
&\leq
 \sum_{k\in \NN} \exp \left( \kappa \sqrt{\max\{0,E_k\}} \right) \lvert \alpha_k \rvert^2
 \leq
 \Da \sum_{k \in \NN} \lvert \alpha_k \rvert^2 .
\end{align*}
% Therefore, we have $\sum_{k\in\NN} \theta_k \lvert \alpha_k \rvert^2 \leq \Da \sum_{k\in\NN} \lvert \alpha_k \rvert^2$.
Since
\[
n \mapsto \frac{\sum_{k=1}^n \theta_k \lvert \alpha_k \rvert^2 }{\sum_{k=1}^n \lvert \alpha_k \rvert^2 }
\]
is monotonously increasing, this implies
\[
\text{for all $n \in \NN$} \colon
\sum_{k=1}^n \theta_k \lvert \alpha_k \rvert^2  \leq \Da \sum_{k=1}^n \lvert \alpha_k \rvert^2 .
\]
Hence,
\[
 \frac{\lVert F_n \rVert_{H^1 (\tilde X_{\rout_3})}^2 }{\lVert F_n \rVert_{H^1 (X_1)}^2 } \leq \Da D_3^2
\]
We use Propositions~\ref{prop:interpolation1} and \ref{prop:interpolation2} and obtain
\begin{align*}
 \lVert F_n \rVert_{H^1 (\tilde X_{\rout_3})}
 &\leq \Da^{1/2} D_3 \lVert F_n \rVert_{H^1 (X_1)} \\
 &\leq \Da^{1/2} D_1^\gamma D_2 D_3
 \lVert F_n \rVert_{H^1 (\tilde X_{\rout_3})}^{1 - \gamma}
 \lVert (\partial_{x_{d+1}} F_n)_0 \rVert_{L^2 (W_\delta(L))}^{\gamma / 2}
 \lVert F_n \rVert_{H^1 (U_3(L))}^{\gamma / 2} .
\end{align*}
Since $U_3 (L) \subset \tilde X_{\rout_3}$ we have
\[
\lVert F_n \rVert_{H^1 (\tilde X_{\rout_3})} \leq
 \Da^{1/\gamma} D_1^2 D_2^{2/\gamma} D_3^{2/\gamma}
\lVert (\partial_{x_{d+1}} F_n)_0 \rVert_{L^2 (W_\delta(L))} .
\]
By Ineq.~\eqref{eq:lower}, the square of the left hand side is bounded from below by
\[
\lVert F_n \rVert_{H^1 (\tilde X_{\rout_3})}^2 \geq
\lVert F_n \rVert_{H^1 (\Lambda_L \times [-\rout_3 , \rout_3 ] ) }^2 \geq
 \frac{\rout_3}{2}\sum_{k=1}^n  \lvert \alpha_k \rvert^2  .
\]
Putting everything together we obtain by using $(\partial_{d+1} F_n)_0 = \phi_n$
\begin{equation*} \label{ucp1}
  \forall L \in \NN_{\mathrm{odd}}\colon \quad \tilde C_{\sfuc}^{\Ca} \lVert \phi_n \rVert_{L^2 (\Lambda_L)}^2 \leq  \lVert \phi_n \rVert_{L^2 (W_\delta(L))}^2
\end{equation*}
where $\tilde C_{\sfuc}^{\Ca} = \tilde C_{\sfuc}^{\Ca} (d,\delta , \Da , \lVert V \rVert_\infty) = (R_3/2) \Da^{-2/\gamma} D_1^{-4} ( D_2 D_3 )^{-4/\gamma}$.
For $D_1$, $D_2$, and $D_3$ we infer from \cite{NakicTTV}
\begin{equation*}
  D_1^{-4} \geq \delta^{-K_1 \lVert V \rVert_\infty^{2/3}}, \quad
  D_2^{-4/\gamma} \geq \delta^{K_2 \lVert V \rVert_\infty^{2/3}}, \quad \text{and} \quad
  D_3^{-4/\gamma} \geq \delta^{ K_3 \lVert V \rVert_\infty^{2/3}} .
\end{equation*}
and calculate
\begin{equation*}
 \Da^{-2/\gamma} = \left( \frac{\frac{1}{2} - \frac{1}{8} \sqrt{16 - \delta^2}}{6 \euler \sqrt{d}} \right)^{2\ln \Da / \ln 2}
 \geq
 \left( \frac{\delta^2 / 64}{6 \euler \sqrt{d}} \right)^{2\ln \Da / \ln 2}
 \geq \delta^{K_4 \ln \Da}
\end{equation*}
where $K_i$, $i \in \{1,2,\ldots , 4\}$, are constants depending only on the dimension. Hence,
\[
\tilde C_{\sfuc}^{\Ca} \geq \delta^{\tilde N \bigl(1 + \lVert V \rVert_\infty^{2/3} + \ln \Da \bigr)}
\]
with some constant $\tilde N = \tilde N (d)$. Letting $n$ tend to infinity and using $\lVert \phi_n - \phi \rVert_{L^2 (\Lambda_{L})} \to 0$ for $n \to \infty$, we conclude the statement of the theorem in the case $G = 1$, $\kappa \geq 18\euler\sqrt{d}$, and $L \in \NN_{\mathrm{odd}}$.
\par
Let now $\kappa > 0$ be arbitrary, $G \in (0,\kappa/(18\euler\sqrt{d})]$, and $L / G \in \NN_{\mathrm{odd}}$. We define the map $g : \Lambda_{L/G} \to \Lambda_L$, $g (y) = G \cdot y$. Then, on $\Lambda_{L/G}$ we have
\[
 -\Delta_{L/G} (\psi_k \circ g) + \left( G^2 V_L \circ g \right) \left( \psi_k \circ g \right) =  G^2 E_k   (\psi_k \circ g).
\]
Hence, the functions $\psi_k \circ g$ are an orthonormal basis of eigenfunctions of the operator $\tilde H_L = -\Delta_{L/G} + G^2 V_{L} \circ g$ with eigenvalues $\tilde E_k = G^2 E_k$. We apply our theorem with $G = 1$ and $\kappa / G$ to the function $\tilde \phi = \phi \circ g$ and obtain, using $\lVert \phi \rVert_{\Lambda_L}^2 = G^d \lVert \phi \circ g \rVert_{\Lambda_{L/G}}^2$,
\begin{equation} \label{eq:scaling}
  \lVert \phi \rVert_{W_\delta(L)}^2
  =
  G^{d} \lVert \phi \circ g \rVert_{W_\delta(L) / G}^2 
    \geq
  \left( \frac{\delta}{G} \right)^{\tilde N \bigl(1 + G^{4/3}\lVert V \rVert_\infty^{2/3} +  \ln \Da \bigr)} \lVert \phi \rVert_{\Lambda_{L}}^2 .
\end{equation}
The general case $L / G \in \NN$ follows by a similar scaling argument and the explicit dependence of $\tilde C_\sfuc^{\Ca}$ on the parameters, see \cite{NakicTTV} for details. \qed
\subsection{Proof of Theorem~\ref{thm:result2}}
\label{subsect:result2}
Recall that for a measurable and bounded $V : \RR^d \to \RR$ we denote by $V_L : \Lambda_L \to \RR$ its restriction to $\Lambda_L$, by $\Delta_L$ the Laplace operator on $L^2 (\Lambda_L)$ subject to either Dirichlet, Neumann or periodic boundary conditions, and by
\[
H_L = -\Delta_L + V_L \quad \text{on} \quad L^2 (\Lambda_L) .
\]
the corresponding Schr\"odinger operator. Moreover, we denote the eigenvalues of $H_L$ by $E_k$, $k \in \NN$, enumerated in increasing order and counting multiplicities, and fix a corresponding sequence $\psi_k$, $k \in \NN$, of normalized eigenfunctions.
\begin{lemma} \label{lemma:B->A}
 Let $C_1 , C_2 > 0$, $L \in \NN$, $V : \RR^d \to \RR$ measurable and bounded and $\phi \in L^2(\Lambda_L)$ satisfying
 \[
  \rVert \chi_{[\lambda, \infty)}(H_L) \phi \lVert^2_{\Lambda_L}
  \leq
  C_1
  \euler^{-(C_2 + \epsilon) \sqrt{\lambda + \lVert V_- \rVert_\infty}}
  \lVert \phi \rVert_{\Lambda_L}^2
  \quad
  \text{for every}\
  \lambda \in [ -\lVert V_- \rVert_\infty, \infty).
 \]
 Then we have
 \[
  \sum_{k = 1}^\infty \euler^{C_2 \sqrt{\max \{0 , E_k\}}} \lvert \alpha_k \rvert^2 \leq C_3 \sum_{k = 1}^\infty \lvert \alpha_k \rvert^2,
 \]
 where
 \[
 C_3 = \euler^{C_2(\pi + \lVert V_+ \rVert_\infty^{1/2})} \left( 1 + \frac{C_1 C_2 \pi}{1 - \euler^{- \epsilon \pi}} \right).
\]
\end{lemma}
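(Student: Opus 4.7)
The strategy is to convert the hypothesis on spectral projectors into a concrete tail bound on the coefficients $\alpha_k$ and then evaluate the target sum by a layer-cake/discretization argument. Since $(\psi_k)_{k \in \NN}$ is an orthonormal basis of $L^2(\Lambda_L)$, Parseval's identity gives $\lVert\chi_{[\lambda,\infty)}(H_L)\phi\rVert_{\Lambda_L}^2 = \sum_{E_k \geq \lambda}|\alpha_k|^2$, so the hypothesis directly bounds the tail sums $\sum_{E_k \geq \lambda}|\alpha_k|^2$ by $C_1 \euler^{-(C_2+\epsilon)\sqrt{\lambda+\lVert V_-\rVert_\infty}}\lVert\phi\rVert_{\Lambda_L}^2$ for all $\lambda \geq -\lVert V_-\rVert_\infty$. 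The target sum splits naturally at zero: the piece over $\{k : E_k \leq 0\}$ is bounded trivially by $\lVert\phi\rVert_{\Lambda_L}^2$, and it remains to control $\sum_{E_k > 0} \euler^{C_2\sqrt{E_k}}|\alpha_k|^2$.

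For the positive part I would discretize by partitioning $\{k : E_k > 0\}$ into level sets
\[
I_n := \bigl\{ k \in \NN : E_k > 0,\ \sqrt{E_k + \lVert V_-\rVert_\infty} \in [n\pi,(n+1)\pi) \bigr\}, \qquad n \geq 0.
\]
On $I_n$ one has $\sqrt{E_k} \leq \sqrt{E_k + \lVert V_-\rVert_\infty} < (n+1)\pi$, so the weight is at most $\euler^{C_2(n+1)\pi}$; meanwhile the hypothesis applied at $\lambda = n^2\pi^2 - \lVert V_-\rVert_\infty$ (which lies in the admissible range since $n \geq 0$) yields $\sum_{k \in I_n}|\alpha_k|^2 \leq C_1 \euler^{-(C_2+\epsilon)n\pi}\lVert\phi\rVert_{\Lambda_L}^2$. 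Summing the resulting geometric series gives
\[
\sum_{E_k > 0} \euler^{C_2\sqrt{E_k}}|\alpha_k|^2 \leq C_1 \lVert\phi\rVert_{\Lambda_L}^2 \sum_{n=0}^\infty \euler^{C_2(n+1)\pi - (C_2+\epsilon)n\pi} = \frac{C_1\,\euler^{C_2\pi}}{1 - \euler^{-\epsilon\pi}}\lVert\phi\rVert_{\Lambda_L}^2,
\]
which, combined with the $E_k \leq 0$ contribution, produces a bound of the same structural form as the claimed $C_3$.

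The main obstacle is bookkeeping: pinning down precisely the stated pre-factor $\euler^{C_2(\pi + \sqrt{\lVert V_+\rVert_\infty})}$ and the $C_2\pi$ multiplier inside the second factor appears to require replacing the crude step-function bound on each $I_n$ by the integral identity $\euler^{C_2\sqrt{t}} = 1 + \int_0^t \frac{C_2}{2\sqrt{s}}\euler^{C_2\sqrt{s}}\,ds$, swapping sum and integral via Fubini to obtain $\sum_k \euler^{C_2\sqrt{\max\{0,E_k\}}}|\alpha_k|^2 \leq \lVert\phi\rVert_{\Lambda_L}^2 + \int_0^\infty (C_2/2\sqrt{\lambda})\euler^{C_2\sqrt{\lambda}} \sum_{E_k \geq \lambda}|\alpha_k|^2 \, d\lambda$, and then carefully comparing $\sqrt{\lambda}$ with $\sqrt{\lambda + \lVert V_-\rVert_\infty}$ on the resulting one-dimensional integral via $\sqrt{a+b} \leq \sqrt{a} + \sqrt{b}$. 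Apart from this optimization of constants, the argument is elementary once the hypothesis has been rewritten coefficient-wise.
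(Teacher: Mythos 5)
Your proposal is correct, and it takes a route that is genuinely simpler than the paper's. The paper factors through the free Laplacian: it uses the min-max inequality $\lambda_{p(k)} - \lVert V_- \rVert_\infty \leq E_k \leq \lambda_{p(k)} + \lVert V_+ \rVert_\infty$ to replace $\sqrt{\max\{0,E_k\}}$ by $\lvert p(k)\rvert\pi/L + \lVert V_+\rVert_\infty^{1/2}$, then discretizes in integer shells $l-1 \leq \lvert p(k)\rvert < l$ (i.e.\ shells of width $\pi/L$ in $\sqrt{\lambda_{p(k)}}$), performs an Abel-summation/telescoping rearrangement, applies the tail hypothesis, and finally invokes monotonicity of $L \mapsto \bigl(L(1 - \euler^{-\epsilon\pi/L})\bigr)^{-1}$ to make the constant $L$-independent. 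You bypass the free spectrum entirely: you discretize directly in the variable $\sqrt{E_k + \lVert V_-\rVert_\infty}$ (the quantity that actually appears in the hypothesis), in shells of the fixed width $\pi$, and bound each shell sum by a single tail evaluated at $\lambda_n = n^2\pi^2 - \lVert V_-\rVert_\infty$. This is sound (each $I_n$ sits inside $\{E_k \geq \lambda_n\}$ by construction), avoids both the $\lVert V_+\rVert_\infty^{1/2}$ correction and the final $L$-monotonicity step, and the geometric series closes exactly as you write. The price is that you do not reproduce the stated constant $C_3$ literally: you obtain $1 + C_1\euler^{C_2\pi}/(1-\euler^{-\epsilon\pi})$ in place of $\euler^{C_2(\pi+\lVert V_+\rVert_\infty^{1/2})}\bigl(1 + C_1 C_2\pi/(1-\euler^{-\epsilon\pi})\bigr)$, but these have the same order of magnitude in $C_1, C_2, \epsilon$ and serve the downstream estimate in Theorem~\ref{thm:result2} equally well (in fact your form is cleaner, having no explicit $V$-dependence). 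Your suggested Fubini refinement via $\euler^{C_2\sqrt{t}} = 1 + \int_0^t \tfrac{C_2}{2\sqrt{s}}\euler^{C_2\sqrt{s}}\,ds$ is the continuous analogue of the paper's telescoping sum and would also work, yielding yet another constant of the same quality.
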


For the proof of Lemma~\ref{lemma:B->A} and \ref{lem:result3} we shall need explicit formulas for the eigenvalues and eigenfunctions of the negative Laplacian $-\Delta_L$ on $L^2 (\Lambda_L)$. Depending on the boundary conditions we choose the index set $\index = \NN$ in the case of Dirichlet boundary conditions, $\index = \NN_0$ in the case of Neumann boundary conditions, and $\index = 2 \ZZ$ in the case of periodic boundary conditions. Then, the eigenvalues of $-\Delta_L$ are given by
\begin{equation} \label{eq:eigenvalues1}
 \lambda_y = \left( \frac{\pi}{L} \right)^2 \lvert y \rvert_2^2 , \quad y \in \index^d ,
\end{equation}
with corresponding normalized eigenfunctions
\begin{equation} \label{eq:eigenfunctions}
 e_y (x) =
 \begin{cases}
  \displaystyle\lVert e_y \rVert^{-1} \prod_{l=1}^d \sin \left( \frac{\pi y_l}{L} (x_l + L / 2) \right) & \text{in the case of Dirichlet b.c.},\\[1ex]
  \displaystyle\lVert e_y \rVert^{-1} \prod_{l=1}^d \cos \left( \frac{\pi y_l}{L} (x_l + L / 2) \right) & \text{in the case of Neumann b.c.}, \\[1ex]
  \displaystyle\lVert e_y \rVert^{-1} \exp \left( \frac{\mathrm{i} \pi}{L} y \cdot x  \right) & \text{in the case of periodic b.c.}.
 \end{cases}
\end{equation}
The normalization constants $\lVert e_y \rVert^{-1}$ can be easily calculated, though we will not need them.
Moreover, there exists a bijection $p : \NN \to \index^d$ such that
\[
 \lambda_{p (k)} , \quad k \in \NN ,
\]
is the $k$-th eigenvalue of $-\Delta_L$ enumerated in increasing order counting multiplicities. This bijection is unique up to permutations of sites $y \in \index^d$ with the same Euclidean norm.

\begin{proof}[Proof of Lemma~\ref{lemma:B->A}]
By the variational principle, we have for all $k \in \NN$
 \begin{equation} \label{eq:Ek}
\lambda_{p(k)} - \lVert V_- \rVert_\infty \leq E_k \leq \lambda_{p(k)} + \lVert V_+ \rVert_\infty .
 \end{equation}
%For $n \in \NN$ we denote by $\index_n^d = \{y \in \index^d : 1 \leq p^{-1} (y) \leq n\}$ the indices of the first $n$ eigenvalues of $\Delta_L$.
Using Ineq.~\eqref{eq:Ek} and $\sqrt{a+b} \leq \sqrt{a} + \sqrt{b}$ for $a,b\geq 0$, and Eq.~\eqref{eq:eigenvalues1} we obtain
\begin{align*}
 \sum_{k = 1}^\infty \euler^{C_2 \sqrt{\max\{0,E_k\}}} \lvert \alpha_k\rvert^2
 &\leq
 \euler^{C_2 \lVert V_+ \rVert_\infty^{1/2}} \sum_{k=1}^\infty  \euler^{C_2 \lvert p (k) \rvert \pi / L} \lvert \alpha_k \rvert^2  \\
 &\leq
 \euler^{C_2 \lVert V_+ \rVert_\infty^{1/2}} \sum_{l =1}^\infty
 \sum_{\genfrac{}{}{0pt}{1}{k\in\NN :}{l-1 \leq \lvert p (k) \rvert < l}}
 \!\!\!\!
 \euler^{C_2 l \pi / L} \lvert \alpha_{k} \rvert^2 =: \euler^{C_2 \lVert V_+ \rVert_\infty^{1/2}} S .
%  .
\end{align*}
By a telescoping argument we have
\begin{align*}
 S
 & =
 \euler^{C_2 \pi / L}\sum_{l =1}^\infty
 %\!\!\!\!
 \sum_{\genfrac{}{}{0pt}{1}{k\in\NN :}{l-1 \leq \lvert p (k) \rvert < l}}
 \!\!\!\!
  \lvert \alpha_{k} \rvert^2
 +
 \sum_{l =2}^\infty
  \left( \euler^{C_2 l \pi / L} - \euler^{C_2 (l - 1) \pi / L} \right) \sum_{m = l}^\infty
  %\!\!\!\!
 \sum_{\genfrac{}{}{0pt}{1}{k\in\NN :}{m-1 \leq \lvert p (k) \rvert < m}}
 \!\!\!\!\!\!\!\!   \lvert \alpha_{k} \rvert^2  \\
 & =
 \euler^{C_2 \pi / L}
 \sum_{k=1}^\infty
  \lvert \alpha_{k} \rvert^2
 +
 \sum_{l =2}^\infty
  \left( \euler^{C_2 l \pi / L} - \euler^{C_2 (l - 1) \pi / L} \right)
 \sum_{\genfrac{}{}{0pt}{1}{k\in\NN :}{l-1 \leq \lvert p (k) \rvert}}
  \lvert \alpha_{k} \rvert^2  .
\end{align*}
Since $\lvert y \rvert = \sqrt{\lambda_{y}} L / \pi$ and $E_k \geq \lambda_{p(k)} - \lVert V_- \rVert$ by Ineq.~\eqref{eq:Ek}, we have
\begin{align*}
 \sum_{\genfrac{}{}{0pt}{1}{k\in\NN :}{l-1 \leq \lvert p (k) \rvert}}
  \!\!\!\!\!\!  \lvert \alpha_{k} \rvert^2
  \,\,\,\,
 &=
 \!\!\!\!\!\!\!\!\!\!
 \sum_{\genfrac{}{}{0pt}{1}{k\in\NN:}{\lambda_{p(k)} \geq [(l - 1) \pi / L]^2}}
 \!\!\!\!\!\!\!\! \!\!\!\!\!\!
 \lvert \alpha_k \rvert^2
  \quad\,\,\leq
  \!\!\!\!\!\!\!\!\!\!\!
  \sum_{\genfrac{}{}{0pt}{1}{k\in\NN:}{E_k \geq [(l-1) \pi /L]^2 - \lVert V_- \rVert_\infty}}
  \!\!\!\!\!\!\!\!\!\!\!\!\!\!\!\!\!\!\!\!\!
  \lvert \alpha_k \rvert^2
  \,\,\,= \,\,\,
  \left\lVert \chi_{I_l} (H_L) \phi \right\rVert^2 ,
\end{align*}
where $I_l = [ ((l-1) \pi /L)^2 - \lVert V_- \rVert_\infty , \infty)$.
Finally, we use $\exp (C_2 l \pi / L) - \exp (C_2 (l - 1) \pi / L)  \leq (C_2 \pi/L) \exp (C_2 l \pi / L )$ and our assumption on the spectral projector to find
\begin{align*}
 \sum_{k \in \NN} \euler^{C_2 \sqrt{\max\{0,E_k\}}} \lvert \alpha_k\rvert^2
 &\leq \euler^{C_2 \lVert V_+ \rVert_\infty^{1/2}}
 \!\!\left(
 \euler^{C_2 \pi / L} \lVert \phi \rVert^2 + \frac{C_2 \pi}{L} \sum_{l=2}^\infty \euler^{C_2 l \pi / L} \left\lVert \chi_{I_l} (H_L) \phi \right\rVert^2
 \right) \\
 & \leq \euler^{C_2 \lVert V_+ \rVert_\infty^{1/2}}
 \left(
 \euler^{C_2 \pi / L} + \frac{C_1 C_2 \pi}{L} \euler^{C_2 \pi / L} \sum_{l=1}^\infty
  \euler^{- \epsilon l \pi / L}
 \right)
 \lVert \phi \rVert^2\\
 &\leq \euler^{C_2 (\pi + \lVert V_+ \rVert_\infty^{1/2})}
 \left(
  1 + \frac{C_1 C_2 \pi}{L ( 1 - \euler^{- \epsilon \pi / L})}
 \right)  \lVert \phi \rVert^2 .
\end{align*}
 Since the map $\NN \ni L \mapsto (L (1 - \euler^{- \epsilon \pi /L}))^{-1}$ is monotonously decreasing we finally obtain
 \[
\sum_{k \in \NN} \euler^{C_2 \sqrt{\max\{0,E_k\}}} \lvert \alpha_k\rvert^2  \leq  \euler^{C_2(\pi + \lVert V_+ \rVert_\infty^{1/2})} \left( 1 + \frac{C_1 C_2 \pi}{1 - \euler^{- \epsilon \pi}} \right) \lVert \phi \rVert^2.
  \qedhere
 \]
\end{proof}

Now we are in position to prove Theorem~\ref{thm:result2}.
\begin{proof}[Proof of Theorem~\ref{thm:result2}]
First we consider the case $\kappa > 18 e \sqrt{d} = 2R_3$ and $G = 1$.
Hence we have $\epsilon := \kappa  - 2R_3 > 0$.
Note that \eqref{eq:Assumption_B} implies
\begin{equation}\label{eq:Assumption_B_reformulated}
\left\lVert \chi_{[\lambda , \infty)} (H_L) \phi \right\rVert_{\Lambda_L}^2
 \leq
 \Db \euler^{- ( 2 R_3 + \epsilon) \sqrt{\lambda + \lVert V_- \rVert_\infty}} \lVert \phi \rVert_{\Lambda_L}^2.
\end{equation}
From Lemma~\ref{lemma:B->A} and \eqref{eq:Assumption_B_reformulated} we infer that Assumption~\eqref{eq:Assumption_A} of Theorem~\ref{thm:result1} is satisfied with
\begin{align*}
 \Da = \euler^{2R_3  (\pi + \lVert V_+ \rVert_\infty^{1/2})} \left( 1 + \frac{2 \Db R_3   \pi}{1 - \euler^{- \epsilon \pi}} \right) .
\end{align*}
Hence, we can apply Theorem~\ref{thm:result1} and obtain
\begin{equation*} \label{eq:after_thm1}
\lVert \phi \rVert_{W_\delta (L)}^2
\geq  \delta^{\Na \bigl(1 + \lVert V \rVert_\infty^{2/3} +  \ln \Da \bigr)} \lVert \phi \rVert_{L^2 (\Lambda_L)}^2 .
\end{equation*}
As an upper bound for $\ln \Da$ we use $x^{1/2} \leq 1 + x^{2/3}$, $R_3,\Db \geq 1$ and
\[
 - \ln (1 - \euler^{-\epsilon \pi})
 =
 \ln \left( 1 + \frac{\euler^{-\epsilon \pi}}{1 - \euler^{-\epsilon \pi}} \right)
 \leq
 \frac{\euler^{-\epsilon \pi}}{1 - \euler^{-\epsilon \pi}}
 =
 \frac{1}{\euler^{\epsilon \pi} - 1}
 \leq
 \frac{1}{\epsilon \pi} ,
 \]
and find
\begin{align*}
 \ln \Da
 & = 2R_3\pi + 2 R_3  \lVert V \rVert_\infty^{1/2} + \ln (1 - \euler^{-\epsilon \pi} + 2 \Db R_3  \pi) - \ln (1 - \euler^{-\epsilon \pi}) \\
 & \leq 2R_3 (\pi+1) + 2 R_3  \lVert V \rVert_\infty^{2/3} + \ln (\Db) + \ln (3R_3  \pi) + (\epsilon \pi)^{-1} .
\end{align*}
Hence there is a constant $\tilde N$ depending only on the dimension such that
\begin{align*}
 \ln \Da &\leq \tilde N \left(1+\lVert V \rVert_{\infty}^{2/3} + \epsilon^{-1} + \ln \Db \right) .
\end{align*}
This shows the statement of the theorem in the case $\kappa > 18\euler\sqrt{d}$ and $G = 1$. The general case follows by scaling, analogously to the end of the proof of Theorem~\ref{thm:result1}.
\end{proof}

\section{Discussion on optimality}\label{sec:optimality}

In Remark~\ref{rem:optimality} we discussed whether the class of functions $\phi$ satisfying
\begin{equation} \label{eq:polynomial}
 \sum_{k \in \NN} \max \{ 0, E_k \}^\kappa \lvert \alpha_k \rvert^2
 \leq
 D_B \sum_{k \in \NN} \lvert \alpha_k \rvert^2
\end{equation}
for some $D_B,\kappa > 0$ can still exhibit a unique continuation principle as in Theorem~\ref{thm:result1}.
The following lemma leads to a counterexample in the case $V \equiv 0$.

\begin{lemma}
\label{lem:result3}
 Let $L > 0$, $ V \equiv 0$, $\kappa > 0$, $\phi \in C_0^\infty(\Lambda_L)$.
 Then there is $C = C(\phi,L, \kappa) > 0$ such that
 \[
  \sum_{k \in \NN} \lvert E_k \rvert^\kappa \lvert \alpha_k \rvert^2 < C.
 \]
\end{lemma}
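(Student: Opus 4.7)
The plan is to exploit that a function $\phi \in C_0^\infty(\Lambda_L)$ vanishes together with all of its derivatives in a neighborhood of $\partial \Lambda_L$, so that $\phi$ lies in $\operatorname{Dom}((-\Delta_L)^m)$ for every $m \in \NN$, regardless of whether Dirichlet, Neumann, or periodic boundary conditions are imposed. Indeed, the extension of $\phi$ by zero, by even reflection, or periodically, is smooth on all of $\RR^d$, and each iterated Laplacian $(-\Delta)^j \phi$ again lies in $C_0^\infty(\Lambda_L)$ and trivially satisfies the boundary conditions defining $-\Delta_L$. Once this is available, the whole statement reduces to a Parseval argument.

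Concretely, since $V \equiv 0$ we have $E_k \geq 0$ for all $k$, so $|E_k|^\kappa = E_k^\kappa$. Picking any integer $m \geq \kappa$ and using the elementary bound $x^\kappa \leq 1 + x^m$ for $x \geq 0$ gives
\[
\sum_{k \in \NN} |E_k|^\kappa |\alpha_k|^2 \leq \sum_{k \in \NN} |\alpha_k|^2 + \sum_{k \in \NN} E_k^m |\alpha_k|^2 = \|\phi\|^2 + \sum_{k \in \NN} E_k^m |\alpha_k|^2.
\]
For the remaining sum I would apply $(-\Delta_L)^m$ to the spectral expansion $\phi = \sum_k \alpha_k \psi_k$, which by the preceding paragraph converges in $L^2(\Lambda_L)$ to $(-\Delta_L)^m \phi = (-\Delta)^m \phi$; Parseval then yields
\[
\sum_{k \in \NN} E_k^m |\alpha_k|^2 = \langle \phi, (-\Delta_L)^m \phi \rangle \leq \|\phi\| \cdot \|(-\Delta)^m \phi\|,
\]
which is finite because $(-\Delta)^m \phi \in C_0^\infty(\Lambda_L) \subset L^2(\Lambda_L)$. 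Setting $C := \|\phi\|^2 + \|\phi\| \cdot \|(-\Delta)^{\lceil \kappa \rceil} \phi\|$ then gives the claim.

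There is no serious obstacle; the only step that really warrants justification is precisely the one above, namely that a $C_0^\infty(\Lambda_L)$ function genuinely lies in the domain of $(-\Delta_L)^m$ in each of the three boundary-condition setups, which is immediate from the support condition on $\phi$. The significance of the lemma lies not in its difficulty but in its consequence: a polynomial spectral decay condition of the form \eqref{eq:polynomial} is already satisfied by every test function supported strictly inside $\Lambda_L$, and hence can never enforce a scale-free quantitative unique continuation principle on an equidistributed union $W_\delta(L)$ of small balls.
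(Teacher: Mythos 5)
Your proof is correct. The one point that needs care is exactly the one you flag: since $\phi$ vanishes identically near $\partial\Lambda_L$, it belongs to $\operatorname{Dom}\bigl((-\Delta_L)^m\bigr)$ for every $m$ and all three boundary conditions, with $(-\Delta_L)^m\phi=(-\Delta)^m\phi$; the identity $\sum_k E_k^m\lvert\alpha_k\rvert^2=\langle\phi,(-\Delta_L)^m\phi\rangle$ and Cauchy--Schwarz then finish the argument, and the bound $x^\kappa\le 1+x^m$ handles non-integer $\kappa$. The route is genuinely different in execution from the paper's. The paper never invokes operator domains or the spectral theorem abstractly; instead it works with the explicit eigenvalues $\lambda_y=(\pi/L)^2\lvert y\rvert_2^2$ and eigenfunctions $e_y$, $y\in\index^d$, bounds $\lvert y\rvert_2^{2\kappa}$ by $\lvert y\rvert_2^{2N}$ for the least even integer $N>\kappa$, splits $\lvert y\rvert_2^{2N}$ into the coordinate contributions $\lvert y_i\rvert^{2N}$, and uses $\partial_i^N e_y=-(\pi/L)^N\lvert y_i\rvert^N e_y$ together with integration by parts (no boundary terms, as $\phi\in C_0^\infty$) to arrive at $N(L/\pi)^{2N}\sum_i\lVert\partial_i^N\phi\rVert^2$. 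Both arguments thus rest on the same underlying fact --- derivatives can be thrown onto the compactly supported $\phi$ without boundary contributions --- but yours packages it as an abstract functional-calculus statement that would apply verbatim to any self-adjoint realization whose operator powers contain $C_0^\infty(\Lambda_L)$, whereas the paper's is a self-contained computation exploiting the explicit formulas \eqref{eq:eigenvalues1} and \eqref{eq:eigenfunctions} it had already set up for Lemma~\ref{lemma:B->A}, and yields a constant expressed through the coordinate derivatives $\lVert\partial_i^N\phi\rVert^2$ rather than through $\lVert(-\Delta)^{\lceil\kappa\rceil}\phi\rVert$.
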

Now let $\phi \in C_0^\infty(\Lambda_L)$ be non-zero and vanishing on $W_\delta(L)$. By Lemma~\ref{lem:result3}, $\phi$ satisfies \eqref{eq:polynomial} with $\Da := C / \lVert \phi \rVert_{\Lambda_L}^2$, but not
$\lVert \phi \rVert_{W_\delta (L)}^2
\geq C_\sfuc \lVert \phi \rVert_{\Lambda_L}^2$. This shows the following corollary.
\begin{corollary}
 The statement of Theorem~\ref{thm:result1} with $\exp ( \kappa \sqrt{\max \allowbreak \{ 0, \allowbreak E_k \}} )$ replaced by $\max \{0, \allowbreak E_k \}^\kappa$ cannot hold.
\end{corollary}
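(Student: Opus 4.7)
The plan is to exploit that $\phi \in C_0^\infty(\Lambda_L)$ vanishes together with all its derivatives in a neighborhood of $\partial \Lambda_L$, so iterated integration by parts against the eigenfunctions $\psi_k$ of $-\Delta_L$ produces no boundary contributions, regardless of whether Dirichlet, Neumann, or periodic conditions are imposed. Since $V \equiv 0$, the eigenvalues satisfy $E_k \geq 0$, and $\lvert E_k \rvert^\kappa = E_k^\kappa$.

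First, I would fix an integer $m \in \NN$ with $2m \geq \kappa$. Using that $\phi$ is compactly supported strictly inside $\Lambda_L$, Green's formula yields
\[
\langle \psi_k , -\Delta \phi \rangle_{L^2(\Lambda_L)} = \langle -\Delta \psi_k , \phi \rangle_{L^2(\Lambda_L)} = E_k \, \alpha_k,
\]
since all boundary terms involving $\phi$ or $\partial_\nu \phi$ vanish, and $\psi_k \in W^{2,2}(\Lambda_L)$ solves $-\Delta \psi_k = E_k \psi_k$. Iterating this identity, with the observation that $(-\Delta)^{j} \phi \in C_0^\infty(\Lambda_L)$ for every $j \in \NN$, one obtains
\[
\langle \psi_k , (-\Delta)^m \phi \rangle_{L^2(\Lambda_L)} = E_k^m \, \alpha_k.
\]

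Second, since $\{\psi_k\}_{k \in \NN}$ is an orthonormal basis of $L^2(\Lambda_L)$ and $(-\Delta)^m \phi \in L^2(\Lambda_L)$, Parseval's identity gives
\[
\sum_{k \in \NN} E_k^{2m} \lvert \alpha_k \rvert^2 = \lVert (-\Delta)^m \phi \rVert_{L^2(\Lambda_L)}^2 < \infty.
\]
Splitting the sum according to $E_k \leq 1$ (where $E_k^\kappa \leq 1$) and $E_k > 1$ (where $E_k^\kappa \leq E_k^{2m}$ by the choice of $m$), one concludes
\[
\sum_{k \in \NN} \lvert E_k \rvert^\kappa \lvert \alpha_k \rvert^2 \leq \lVert \phi \rVert_{L^2(\Lambda_L)}^2 + \lVert (-\Delta)^m \phi \rVert_{L^2(\Lambda_L)}^2 =: C,
\]
which is the required bound, with $C$ depending on $\phi$, $L$, and (via the choice of $m$) on $\kappa$.

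The only subtlety is making the integration by parts uniform across the three types of boundary conditions, but the compact support of $\phi$ automatically kills every boundary integral, so there is no genuinely hard step; the proof is essentially a one-line consequence of the smoothness of $\phi$ together with Parseval.
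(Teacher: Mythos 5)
Your argument is a correct and in fact cleaner proof of the auxiliary summability statement (Lemma~\ref{lem:result3} in the paper), but it is not yet a proof of the Corollary itself.

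On the summability bound: your route via $(-\Delta)^m$ and iterated Green's formula is more elementary than the paper's. The paper works with the explicit eigenfunctions $e_y$, uses $\partial_i^N e_y = \pm(\pi/L)^N|y_i|^N e_y$, transfers $\partial_i^N$ onto $\phi$, and along the way needs a pointwise comparison of $|y|_2^{2N}$ with $\sum_i |y_i|^{2N}$. You bypass all of this by taking $(-\Delta)^m\phi$, which is again in $C_0^\infty(\Lambda_L)$, using that the boundary terms vanish regardless of boundary condition, and applying Parseval to get $\sum_k E_k^{2m}|\alpha_k|^2 = \lVert(-\Delta)^m\phi\rVert^2$. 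Your splitting at $E_k \leq 1$ versus $E_k>1$ (legitimate since $V\equiv 0$ forces $E_k\geq 0$, so $|E_k|^\kappa = E_k^\kappa$) then gives the bound with $C = \lVert\phi\rVert^2 + \lVert(-\Delta)^m\phi\rVert^2$. This is correct and avoids both the explicit eigenfunction formulas and any dimension-dependent combinatorial constant.

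However, the statement you were asked to prove is the Corollary, and your proposal stops short of it. The Corollary asserts that the estimate $\lVert\phi\rVert_{W_\delta(L)}^2 \geq C_\sfuc\lVert\phi\rVert_{\Lambda_L}^2$ cannot hold under the polynomial decay hypothesis \eqref{eq:polynomial}. Your argument establishes that \emph{every} $\phi\in C_0^\infty(\Lambda_L)$ satisfies that hypothesis (with some constant depending on $\phi$), but you never exhibit the counterexample. To finish, pick a non-zero $\phi\in C_0^\infty(\Lambda_L)$ supported in $\Lambda_L\setminus\overline{W_\delta(L)}$ (possible because $\delta<G/2$, so $W_\delta(L)$ has a non-empty open complement in $\Lambda_L$). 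By your bound, this $\phi$ satisfies \eqref{eq:polynomial} with $\Da = C/\lVert\phi\rVert_{\Lambda_L}^2$, yet $\lVert\phi\rVert_{W_\delta(L)}=0 < C_\sfuc\lVert\phi\rVert_{\Lambda_L}^2$ for any $C_\sfuc>0$. Adding this paragraph closes the gap and completes the proof of the Corollary.
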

\begin{proof}[Proof of Lemma~\ref{lem:result3}]
 Since the eigenfunctions and eigenvalues of $-\Delta$ on $\Lambda_L$ are explicitly known, cf. Section~\ref{subsect:result2}, we can replace the sum on the left hand side by
 \begin{align*}
 \sum_{k \in \NN} \lvert E_k \rvert^\kappa \lvert \alpha_k \rvert^2
 =
 \sum_{y \in \index^d} \left( \frac{\pi}{L} \right)^{2 \kappa} \lvert y \rvert_2^{2 \kappa} \lvert \langle e_y , \phi \rangle \rvert^2
  & \leq
  \left( \frac{\pi}{L} \right)^{2 \kappa} \sum_{y \in \index^d} \lvert y \rvert_2^{2 N} \lvert \langle e_y , \phi \rangle \rvert^2
 \end{align*}
 where $N \in 2\NN$ is the least even integer larger than $\kappa$.
 For the eigenfunctions, see Eq.~\eqref{eq:eigenfunctions}, we have $\partial_i^{N} e_y = -(\pi / L)^{N} \lvert y_i \rvert^{N} e_y$ for $i \in \{ 1, \cdots, d \}$. We calculate using integration by parts
 \begin{align*}
  &\sum_{y \in \index^d} \lvert y \rvert_2^{2 N} \lvert \langle e_y , \phi \rangle \rvert^2
  \leq
  N \sum_{i = 1}^d \sum_{y \in \index^d} \lvert y_i \rvert^{2 N} \lvert \langle e_y , \phi \rangle \rvert^2
  =
  N \left( \frac{L}{\pi} \right)^{2N} \sum_{i = 1}^d \sum_{y \in \index^d} \lvert \langle \partial_{i}^{N} e_y , \phi \rangle \rvert^2\\
  & \quad=
  N \left( \frac{L}{\pi} \right)^{2N} \sum_{i = 1}^d \sum_{y \in \index^d} \lvert \langle e_y , \partial_{i}^{N} \phi \rangle \rvert^2
  =
  N \left( \frac{L}{\pi} \right)^{2N} \sum_{i = 1}^d \lVert \partial_{i}^{N} \phi \rVert_{\Lambda_L}^2 . \qedhere
 \end{align*}
\end{proof}

\section*{Acknowledgments} We thank I.~Veseli\'c for the advice to pursue this research direction and for stimulating discussions during the completion of this work. We also gratefully acknowledge Thomas Kalmes and Ivica Naki\'c for helpful discussions.
The authors enjoyed hospitality at the University of Zagreb where parts of this work were done. This visit was supported by the bi-national German-Croatian DAAD-MZOS project \emph{The cost of controlling the heat flow in a multiscale setting}. Moreover, this work has been partially supported by the DFG grant Ve 253/6-1
\emph{Eindeutige-Fortsetzungsprinzipien und Gleichverteilungseigenschaften von Eigenfunktionen}.


\begin{thebibliography}{NTTV16}

\bibitem[Bak13]{Bakri-13}
L.~Bakri.
\newblock Carleman estimates for the {S}chr\"odinger operator. {A}pplications
  to quantitative uniqueness.
\newblock {\em Commun. Part. Diff. Eq.}, 38(1):69--91, 2013.

\bibitem[BK05]{BourgainK-05}
J.~Bourgain and {C.~E.} Kenig.
\newblock On localization in the continuous {A}nderson-{B}ernoulli model in
  higher dimension.
\newblock {\em Invent. Math.}, 161(2):389--426, 2005.

\bibitem[BK13]{BourgainK-13}
J.~Bourgain and A.~Klein.
\newblock Bounds on the density of states for {S}chr\"odinger operators.
\newblock {\em Invent. Math.}, 194(1):41--72, 2013.

\bibitem[BTV15]{BorisovTV-15c}
{D. I.} Borisov, M.~Tautenhahn, and I.~Veseli\'c.
\newblock Scale-free quantitative unique continuation and equidistribution
  estimates for solutions of elliptic differential equations.
\newblock arXiv:1512.06347 [math.AP], 2015.

\bibitem[Car39]{Carleman-39}
T.~Carleman.
\newblock Sur un probl\'eme d'unicit\'e pour les syst\`emes d'\'equations aux
  d\'eriv\'ees partielles \`a deux variables ind\'ependantes.
\newblock {\em Ark. Mat. Astron. Fysik}, 26B(17):1--9, 1939.

\bibitem[DF88]{DonnellyF-88}
H.~Donnelly and C.~Fefferman.
\newblock Nodal sets for eigenfunctions on {R}iemannian manifolds.
\newblock {\em Invent. math.}, 93(1):161--183, 1988.

\bibitem[H{\"o}r89]{Hoermander-89}
L.~H{\"o}rmander.
\newblock {\em The Analysis of Linear Partial Differential Operators {I}:
  Distribution Theory and Fourier Analysis}.
\newblock Springer, Berlin, 1989.

\bibitem[JL99]{JerisonL-99}
D.~Jerison and G.~Lebeau.
\newblock Nodal sets of sums of eigenfunctions.
\newblock In M.~Christ, {C.~E.} Kenig, and C.~Sadosky, editors, {\em Harmonic
  analysis and partial differential equations}, Chicago Lecture notes in
  Mathematics, pages 223--239. The University of Chicago Press, Chicago, 1999.

\bibitem[Kle13]{Klein-13}
A.~Klein.
\newblock Unique continuation principle for spectral projections of
  {S}chr{\"o}dinger operators and optimal {W}egner estimates for non-ergodic
  random {S}chr{\"o}dinger operators.
\newblock {\em Commun. Math. Phys.}, 323(3):1229--1246, 2013.

\bibitem[Kuk98]{Kukavica-98}
I.~Kukavica.
\newblock Quantitative uniqueness for second-order elliptic operators.
\newblock {\em Duke Math. J.}, 91(2):225--240, 1998.

\bibitem[Mes92]{Meshkov-92}
{V.~Z.} Meshkov.
\newblock On the possible rate of decay at infinity of solutions of second
  order partial differential equations.
\newblock {\em Math. USSR Sb.}, 72(2):343--361, 1992.

\bibitem[NTTV15]{NakicTTV-15}
I.~Naki\'c, M.~T\"aufer, M.~Tautenhahn, and I.~Veseli\'c.
\newblock Scale-free uncertainty principles and {W}egner estimates for random
  breather potentials.
\newblock {\em C. R. Math.}, 353(10):919--923, 2015.

\bibitem[NTTV16]{NakicTTV}
I.~Naki\'c, M.~T\"aufer, M.~Tautenhahn, and I.~Veseli\'c.
\newblock Scale-free unique continuation principle, eigenvalue lifting and
  {W}egner estimates for random {S}chr\"odinger operators.
\newblock arXiv:1609.01953 [math.AP], 2016.

\bibitem[RL12]{RousseauL-12}
J.~Le Rousseau and G.~Lebeau.
\newblock On {C}arleman estimates for elliptic and parabolic operators.
  {A}pplications to unique continuation and control of parabolic equations.
\newblock {\em ESAIM Contr. Optim. Ca.}, 18(3):712--747, 2012.

\bibitem[RMV13]{Rojas-MolinaV-13}
C.~Rojas-Molina and I.~Veseli\'c.
\newblock Scale-free unique continuation estimates and application to random
  {S}chr\"odinger operators.
\newblock {\em Commun. Math. Phys.}, 320(1):245--274, 2013.

\bibitem[TTV16]{TaeuferTV-16}
M.~T{\"a}ufer, M.~Tautenhahn, and I.~Veseli\'c.
\newblock Harmonic analysis and random schr\"odinger operators.
\newblock In M.~Mantoiu, G.~Raikov, and R.~{Tiedra de Aldecoa}, editors, {\em
  Spectral Theory and Mathematical Physics}, volume 254 of {\em Operator
  Theory: Advances and Applications}, pages 223--255. Birkh\"auser, Basel,
  2016.

\end{thebibliography}
\end{document}